\newtheorem{theorem}{Theorem}[section]
\newtheorem{thm}[theorem]{Theorem}
\newtheorem{pro}{Proposition}[section]
\newtheorem{cor}[pro]{Corollary}
\newtheorem{lemma}[pro]{Lemma}
\newtheorem{remark}[pro]{Remark}
\newtheorem{defi}[pro]{Definition}
\newtheorem{example}[pro]{Example}
\numberwithin{equation}{section}
\def\cal{\mathcal }
\def\R{\mathbb R}
\def\Z{\mathbb Z}
\def\L{\mathbb L}
\def\mathscr{\mathcal }
\newcommand{\bomega}{{\boldsymbol {\omega}}}
\newcommand{\bgamma}{{\boldsymbol {\gamma}}}
\newcommand{\bdeta}{{\boldsymbol {\eta}}}
\newcommand{\mi}{{\mathbf{i}}}
\begin{document}

\title[Optimal parameterizations of self-similar sets]
{%Optimal parameterizations of self-similar sets,
Space-filling curves of self-similar sets (I): Iterated function systems with order structures}

\author{Hui Rao}
\address{Hui Rao: Department of Mathematics, Hua Zhong Normal University}
\email{hrao@mail.ccnu.edu.cn}
\author{Shu-Qin Zhang$\dag$}
\address{Shu-Qin Zhang: Department of Mathematics, Hua Zhong Normal University}
\email{zhangsq\_ccnu@sina.com}
\thanks{$\dag$ The correspondence author.}
\thanks{The work is supported by CNFS Nos 11431007, 11171128 and 11471075.}
\thanks{Key words: space-filling curve, linear GIFS, self-similar set, optimal parametrization.}

\begin{abstract}
 This paper is the first paper of three papers in a series, which intend to 
   provide a  systematic   treatment for the   space-filling curves of self-similar sets.

In the present paper,  we introduce a notion of \emph{linear  graph-directed IFS} (linear GIFS in short).
We show that to construct a space-filling curve of a self-similar set, it is amount to explore
its linear GIFS structures.  Some other notions, such as chain condition, path-on-lattice IFS, and visualizations of
space-filling curves are also concerned.

In sequential papers \cite{Dai15} and \cite{RZ14}, we obtain a universal algorithm to construct space-filling curves of self-similar sets of finite type, that is, as soon as the IFS is given, the computer will do everything automatically.
  Our study extends almost all the known results on space-filling curves.

  \medskip

  \textbf{MSC 2000:} 28A80, 37A05,37B10.
\end{abstract}

\maketitle

\section{\textbf{Introduction}}
Space-filling curves have fascinated mathematicians for over a century.
%and has intrigued many generations of students of mathematics.
Its history started with the monumental result of Peano in 1890 (\cite{Peano1890}).
 One year later, Hilbert
%\cite{Hilbert1891}
gave an alternative construction, now called Hilbert curve. In 1921,  Sierpi\'nski
%\cite{Sierp1912}
discovered Sierpi\'nski space-filling curve, and it was generalized by  P\'olya. (See \cite{Ciesie2012}.)
For variations of the above constructions, see the survey book of Sagan \cite{Hans94}.

 \begin{figure}[h]
  % Requires \usepackage{graphicx}
 \includegraphics[width=.32 \textwidth]{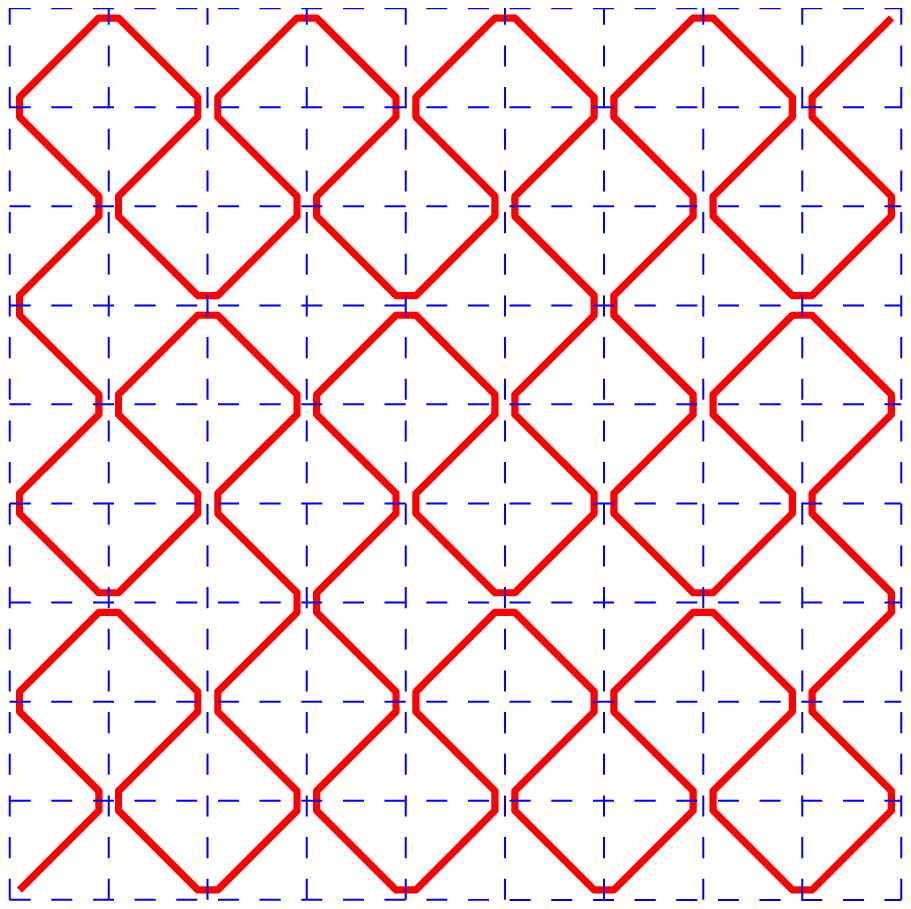}
 \includegraphics[width=.32 \textwidth]{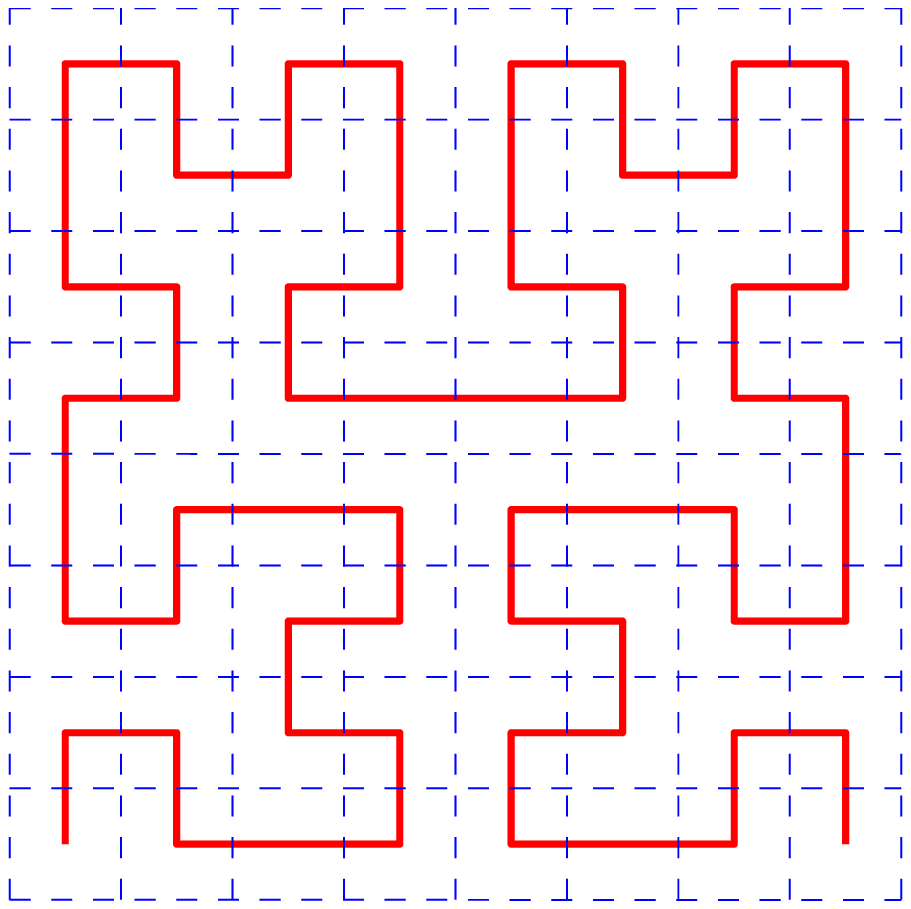}
  \includegraphics[width=.32 \textwidth]{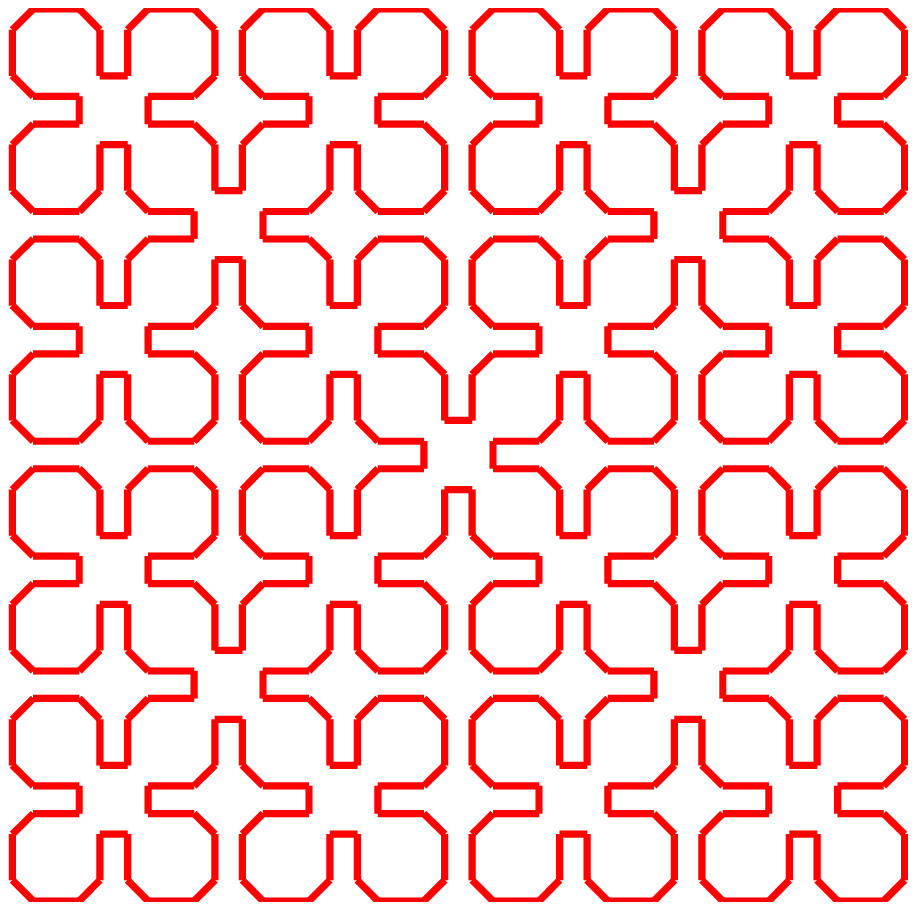}
  \caption{Space-filling curves of Peano, Hilbert and Sierpi\'nski.}\label{PHS}
\end{figure}

Around 1970's, several remarkable progresses have been made:
  J. Heighway, a physicist, found the Heighway dragon (\cite{Gardner76, Davis70});
W. Gosper, a computer scientist,  found the Gosper island (\cite{Gardner76});
H. Lindenmayer, a biologist,  introduced L-system (\cite{Lind68}), which becomes a powerful method to produce space-filling curves later.

\begin{figure}[h]
  % Requires \usepackage{graphicx}
  \includegraphics[width=0.35 \textwidth]{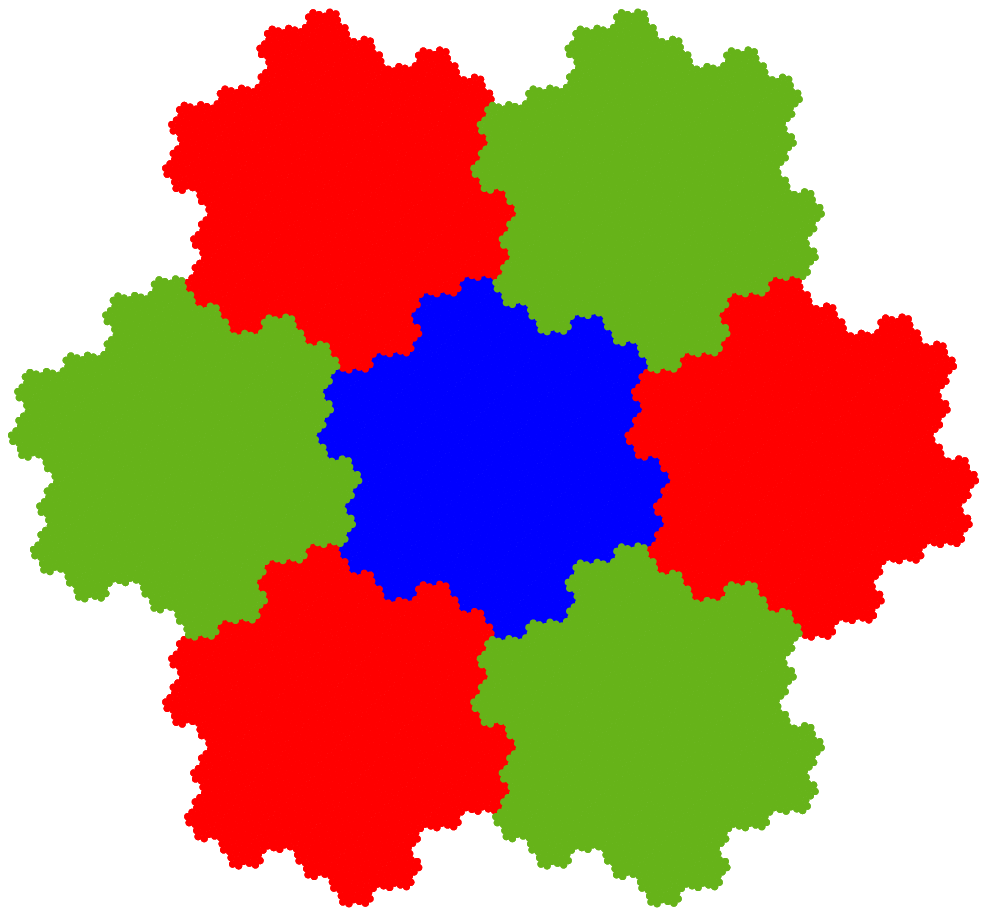} \hspace{-1.cm}
  \includegraphics[width=0.35 \textwidth]{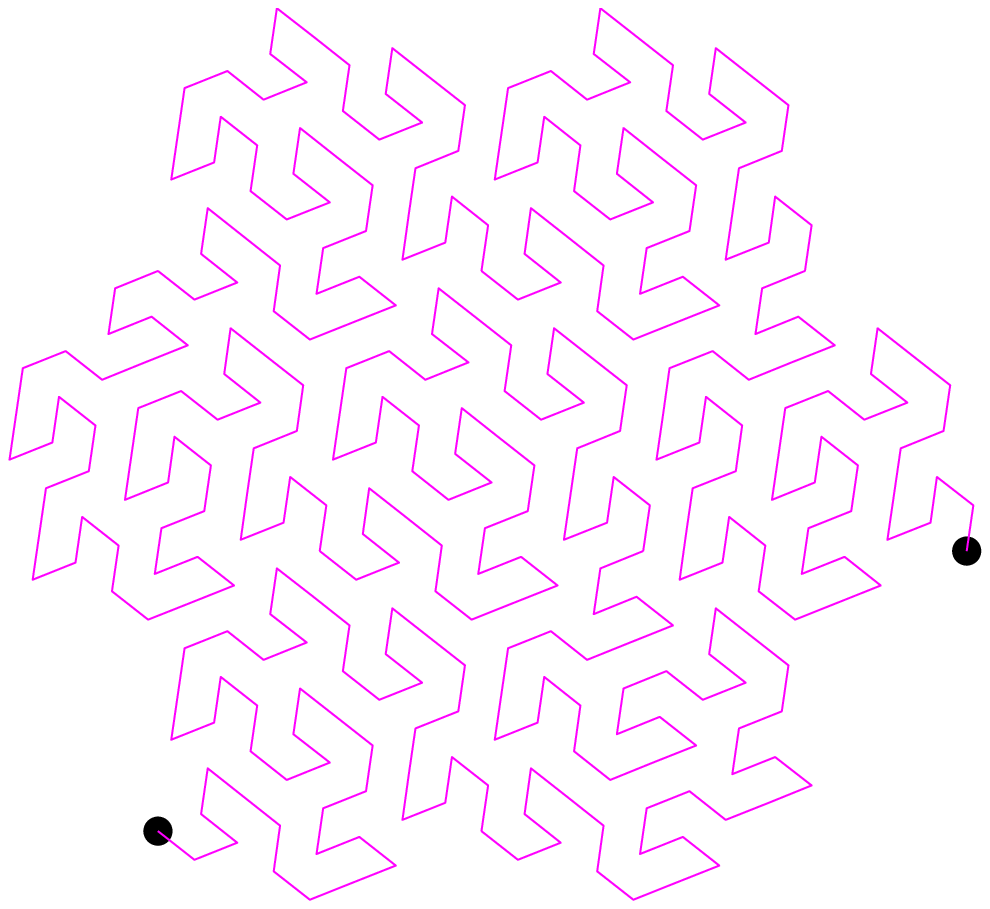} \hspace{-1.cm}
  \caption{The Gosper island is a 7-reptile, and the Gosper curve is an optimal parametrization.}\label{pic-Gosper}
\end{figure}

Two important facts are gradually recognized: All the constructions are based on certain self-similar structures,
 %and actually they are parameterizations of self-similar sets.
and certain `substitution rules' play an essential role in the constructions.

Next major progress was made by
Dekking \cite{Dekking82} (1982),
  where he claimed that  he ``introduce a powerful method of describing and generating space-filling curves''.
%However, at that time, many conceptions of fractal geometry, including GIFS,  is not developed yet.
This paper has important impact on both space-filling curves and fractal geometry.
On the fractal geometry aspect, \cite{Dekking82} leads to the emerge of the notion of graph-directed iterated function system. On the space-filling curve aspect, Dekking's method has been accepted by  computer scientists and as the
``vector method''.

%It is worthy to mention that,
In recent years, various interesting constructions of space-filling curves appear  on the internet,
for example, ``www.fractalcurves.com'' (see \cite{Ventrella})
   %(he also put his book ``Brainfilling Curves'' on this web-site)
     and  ``teachout1.net/village/'' (see \cite{Gary}). % (he named it  a ``Village Idiot Research'').
 Besides, space-filling curves of
higher dimensional cubes have been studied  by
Milne \cite{Milne80} and  Gilbert \cite{Gilbert84}.
% and Skubalska-Rafajlowicz \cite{Ewa94}.
%Space-filling curves of manifold are considered by \cite{WongLing2003}.
For applications of space-filling curves,
%in mathematics \cite{SalemZyg45,CiesLarson91, Rosenthal71, PlatBar89, LeraSerg2010, Sirvent2008}
%as well as computer science
see Bader \cite{Bader13} and the references therein.

%\medskip

 In this paper and two sequential papers, we unveil the mystery of space-filling curves by providing a rigorous and systematic treatment.

First, let us specify our meaning of space-filling curves.
%In this paper, we investigate the optimal parameterizations of self-similar sets.
We call  an onto mapping from an interval $[a,b]$ to a self-similar set $K$ \emph{an optimal parametrization},
   if it is almost one-to-one, measure-preserving and $1/s$-H\"older
 continuous, where $s=\dim_H K$ is the Hausdorff dimension of $K$. (For precise definition, see Section \ref{sec-basic}.)
 It is  observed that   most classical space-filling curves
fulfill the above requirements (\cite{Milne80, Gilbert84}), while some others like the Lebesgue curve does not (see Figure \ref{fig-Leb}(right)).
%Indeed, our study is strongly motivated by the classical space-filling curves.
 It is proper to call an optimal parametrization   a \emph{space-filling curve}  if $K$ has non-empty interior,
and  call it  a \emph{fractal-filling curve} otherwise. However, for simplicity, we shall just call an optimal parametrization a space-filling curve.
 
\begin{figure}
  % Requires \usepackage{graphicx}
  \includegraphics[width=.35 \textwidth]{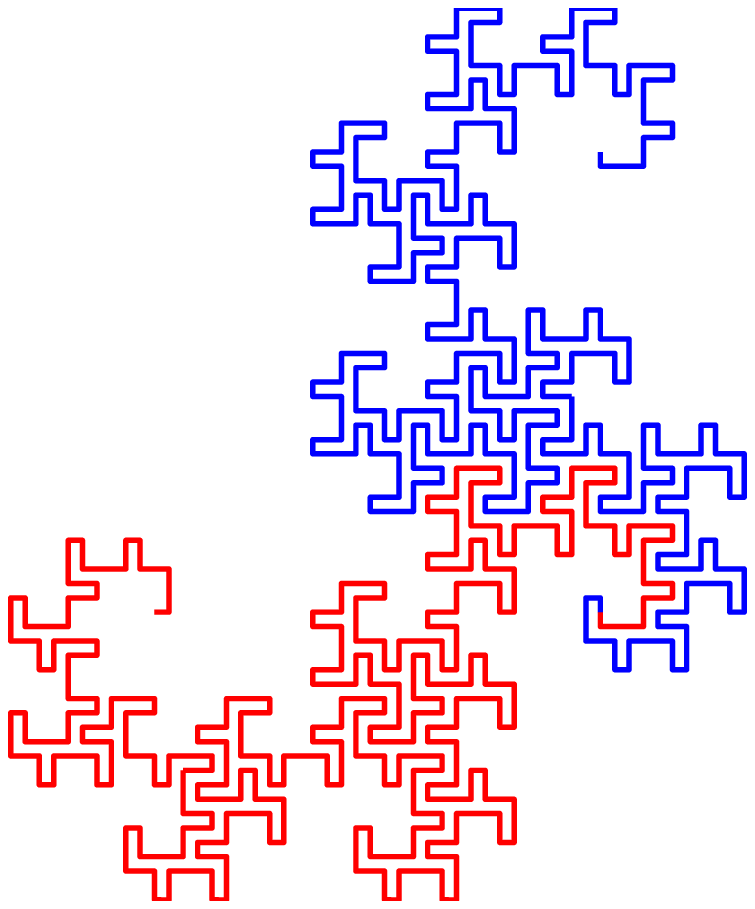}
  \includegraphics[width=.33 \textwidth]{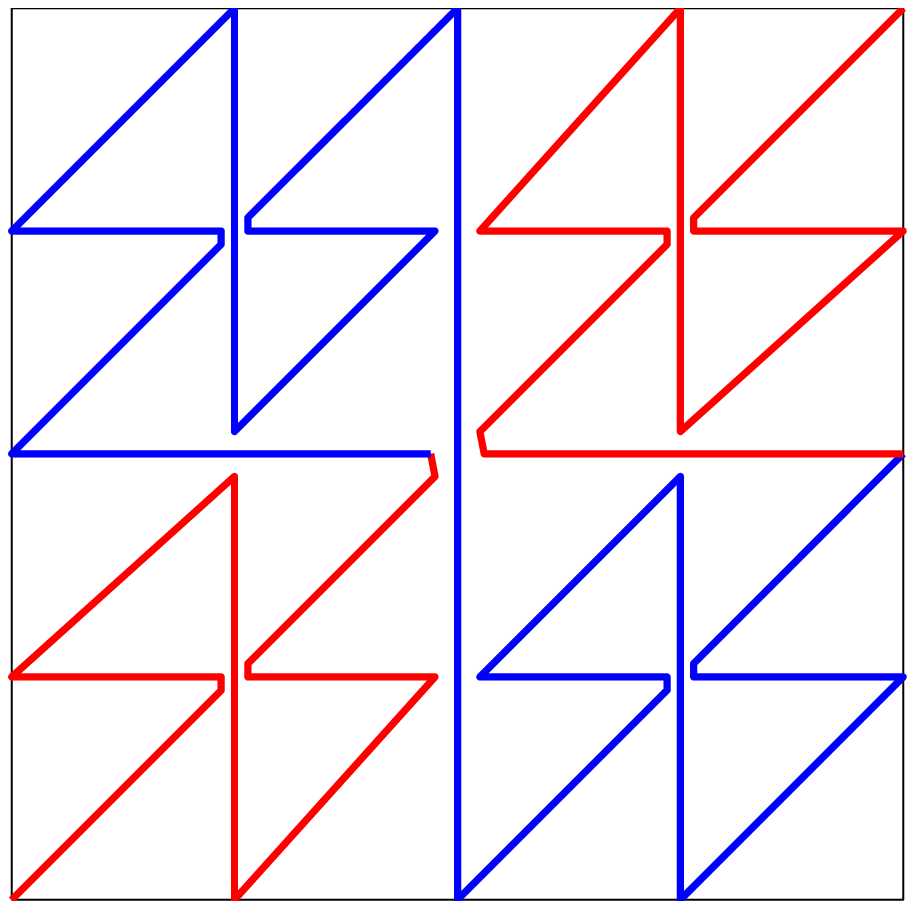}\\
  \caption{Heighway dragon curve and Lebesgue curve.}\label{fig-Leb}
\end{figure}

The main contribution of this paper is that  we introduce a notion of \emph{linear GIFS} to describe 
and handle space-filling curves.
 The \emph{graph-directed iterated function system}, or GIFS in short,
 is an important notion in fractal geometry.
 % and it
 %appears naturally  in mathematics as well as other disciplines.
 We equip the functions in a GIFS with a partial order and call it an ordered GIFS, and
  this order  induces a dictionary order of the associated symbolic space. An ordered GIFS is called a linear GIFS, if every two consecutive cylinders have non-empty intersections (see Section \ref{sec-GIFS} for precise definition). We show that

\begin{theorem}\label{M-theo1}
Let  $\{E_j\}_{j=1}^N$
be the invariant sets
 of a linear graph-directed IFS satisfying the open set condition and $0<\mathcal{H}^\delta(E_j)<\infty$ for $j=1, \dots, N$, where $\delta$ is the similarity dimension, then $E_j$ admits  optimal parametrizations for every $j=1,\dots, N$.
\end{theorem}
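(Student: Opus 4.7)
The plan is to construct an optimal parametrization of each $E_j$ by pulling back a dictionary-order coding from the symbolic space of the GIFS, using the Hausdorff $\delta$-measure as arc length along the interval. For each vertex $j$, let $\Sigma_j$ denote the space of infinite admissible paths starting at $j$ and $\pi_j:\Sigma_j\to E_j$ the canonical projection. The partial order on the edges of the GIFS induces a total dictionary order on $\Sigma_j$. Setting $L_j=\mathcal{H}^\delta(E_j)$, the assumption that $\delta$ is the similarity dimension and that OSC holds yields the Mauldin--Williams equation $L_j=\sum_{e:\,s(e)=j}r_e^\delta L_{t(e)}$; hence there is a unique Borel measure $\mu_j$ on $\Sigma_j$ determined by $\mu_j([\mathbf{i}])=r_{\mathbf{i}}^\delta L_{t(\mathbf{i})}$ on every cylinder, of total mass $L_j$. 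Let $F_j:\Sigma_j\to[0,L_j]$ be the cumulative distribution function of $\mu_j$ relative to the dictionary order and define $\phi_j(t):=\pi_j(\mathbf{i})$ for any $\mathbf{i}\in F_j^{-1}(t)$.

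The crucial step is to verify well-definedness and continuity, and this is exactly where the linear hypothesis enters. If $\mathbf{i}<\mathbf{i}'$ satisfy $F_j(\mathbf{i})=F_j(\mathbf{i}')$, then no cylinder strictly between them carries $\mu_j$-mass, which forces the level-$n$ cylinders $E_{\mathbf{i}|_n}$ and $E_{\mathbf{i}'|_n}$ to be consecutive at level $n$ for every $n$. The linear condition then gives $E_{\mathbf{i}|_n}\cap E_{\mathbf{i}'|_n}\neq\emptyset$ for all $n$, and since $\mathrm{diam}\,E_{\mathbf{i}|_n}\to 0$, we obtain $\pi_j(\mathbf{i})=\pi_j(\mathbf{i}')$. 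A finite-level version of the same chaining argument gives continuity of $\phi_j$: for nearby $t,t'$, the corresponding words fall inside a bounded chain of consecutive level-$n$ cylinders whose union has small diameter. The measure-preserving and almost one-to-one properties then follow because $\phi_j$ conjugates Lebesgue on $[0,L_j]$ with $\mu_j$, $(\pi_j)_*\mu_j=\mathcal{H}^\delta|_{E_j}$ by uniqueness of the Mauldin--Williams measure, and OSC ensures that the set of points of $E_j$ with multiple codings has $\mathcal{H}^\delta$-measure zero.

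For H\"older continuity I compare diameter with mass: given $|t-t'|=h$, choose the largest $n$ for which $\phi_j(t)$ and $\phi_j(t')$ sit in a union of at most two consecutive level-$n$ cylinders $E_{\mathbf{i}|_n}\cup E_{\mathbf{i}'|_n}$. By the linear condition this union has diameter $\lesssim r_{\mathbf{i}|_n}$, while its $\mu_j$-mass is $\asymp r_{\mathbf{i}|_n}^\delta$ and is bounded below by a constant multiple of $h$, yielding $|\phi_j(t)-\phi_j(t')|\lesssim h^{1/\delta}$ with constants depending only on the graph and on $\max_k L_k/\min_k L_k$.

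The main obstacle is twofold. First, one must verify that the consecutive-intersection hypothesis propagates correctly through the entire tower of nested level-$n$ cylinders so that nested unions collapse to a single point; this is a genuinely new geometric input of the linear GIFS framework with no analogue in the classical IFS setting. Second, the H\"older comparison demands a bounded-distortion style estimate linking $\mu_j$-mass and diameter of a finite chain of consecutive level-$n$ cylinders, for which the cardinality of the chain must be controlled independently of $n$ --- another consequence of the linear condition combined with OSC. Once these two ingredients are in place, the remaining verifications reduce to routine computations with self-similar measures and the Mauldin--Williams equation.
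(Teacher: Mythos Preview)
Your construction is essentially the paper's: your cumulative-distribution map $F_j$ is exactly the projection $\rho_j$ of what the paper calls the \emph{measure-recording GIFS} (an auxiliary linear GIFS on $\mathbb{R}$ with invariant sets $[0,L_j]$ and maps $x\mapsto r_e^\delta x+b_e$), so that $\phi_j=\pi_j\circ F_j^{-1}$ coincides with the paper's $\psi_j=\pi_j\circ\rho_j^{-1}$, and your well-definedness, almost one-to-one, and measure-preserving arguments match the paper's line for line. One small slip in your H\"older sketch: the inequality you need is the reverse of what you wrote---the mass of the two-cylinder union trivially dominates $h$, but what gives the estimate is that maximality of $n$ forces a full level-$(n{+}1)$ interval to lie inside $[t,t']$, hence $h\gtrsim r_{\min}^\delta\, r_{\mathbf{i}|_n}^\delta$, which is precisely the paper's Case~1/Case~2 computation.
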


The proof of Theorem \ref{M-theo1} is constructive;
hence, to construct space-filling curves  is amount to seek a linear GIFS structure of the given set.
The common point of the L-language method and Dekking's vector method is that, first they construct
a linear GIFS, and then verify the open set condition.

\begin{remark}{\rm $(i)$ The notion of linear GIFS  can be regarded as a completion of the study of Dekking \cite{Dekking82}.

$(ii)$ After we finished this paper, we acknowledge that an idea similar to our linear GIFS has appeared in Akiyama and Loridant \cite{Akiyama10, Akiyama11} when studying the parameterizations of boundaries of self-affine tiles.
}
\end{remark}

For an ordered GIFS, one can associate to each invariant set $E_j$ a head (the point with the lowest coding)
and a tail  (the point with the highest coding).  Using heads and tails, we define a chain condition (see Section \ref{sec-chain})
 which
provides a simple and practical criterion of linear GIFS.

\begin{thm}\label{theo-chain} An ordered GIFS is a linear GIFS if and only if it satisfies the chain condition.
\end{thm}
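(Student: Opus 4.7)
The plan is to prove the two implications separately.

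For chain $\Rightarrow$ linear, I would argue directly from the definition. At each vertex $j$ with consecutive outgoing edges $f_i, f_{i+1}$ (targets $\tau(i), \tau(i+1)$), the chain condition supplies a point
\[
p \;=\; f_i(\mathrm{tail}(E_{\tau(i)})) \;=\; f_{i+1}(\mathrm{head}(E_{\tau(i+1)}))
\]
which lies in both $f_i(E_{\tau(i)})$ and $f_{i+1}(E_{\tau(i+1)})$, giving linearity at the top level. For two consecutive cylinders at depth $n \geq 2$, I would split into cases: if they share a common prefix map $\mathbf{f}$, push a top-level chain witness forward by $\mathbf{f}$; if they straddle a parent boundary, iterate the chain condition on both sides, producing a single common point that is a tail-descent on one side and a head-descent on the other. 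A short induction on $n$ closes this direction.

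For linear $\Rightarrow$ chain, fix a vertex $j$ and consecutive outgoing edges $f_i, f_{i+1}$. I would use a nested shrinking-cylinder argument. Let $A_n$ be the depth-$n$ cylinder obtained by starting with $f_i$ and then, at each subsequent descent, choosing the child with the maximal outgoing edge; symmetrically let $B_n$ start with $f_{i+1}$ and descend into the minimal child at each step. In the dictionary order $A_n$ is the last depth-$n$ cylinder whose address begins with $i$ and $B_n$ is the first whose address begins with $i+1$, hence $A_n$ and $B_n$ are consecutive at depth $n$. By the linear GIFS hypothesis their intersection is non-empty; pick $z_n \in A_n \cap B_n$. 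Since the GIFS maps are strict contractions, $\mathrm{diam}(A_n), \mathrm{diam}(B_n) \to 0$, and by construction the nested intersections satisfy
\[
\bigcap_n A_n = \{f_i(\mathrm{tail}(E_{\tau(i)}))\}, \qquad \bigcap_n B_n = \{f_{i+1}(\mathrm{head}(E_{\tau(i+1)}))\}.
\]
Because $\{z_n\}$ lies simultaneously in both nests, these two points must coincide, which is exactly the chain equality.

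The main obstacle I anticipate is the dictionary-order bookkeeping: verifying that $A_n$ and $B_n$ really are lex-consecutive at depth $n$ (no other depth-$n$ address sits strictly between them), and that the same-parent versus different-parent case split in the easy direction is exhaustive and coherent with the recursion. Both reduce to the compatibility of the dictionary order with the GIFS decomposition, which is intuitively clear but needs a careful inductive formulation. If ``linear GIFS'' is taken to impose the intersection condition only at the top level rather than at every depth, a further preliminary step is needed to propagate linearity downward, but this also follows by iterating the GIFS recursion on the top-level hypothesis. Once this bookkeeping is in hand, the substantive content of the theorem is the clean shrinking-cylinder picture above, together with its mirror image via the symmetry $i \leftrightarrow i+1$, $\mathrm{tail} \leftrightarrow \mathrm{head}$.
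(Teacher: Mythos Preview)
Your proposal is correct and matches the paper's own proof essentially line for line: for linear $\Rightarrow$ chain the paper builds exactly your $A_n$ and $B_n$ (the highest path after $\omega_1$ and the lowest path after $\gamma_1$), observes they are adjacent at every depth, and uses the shrinking diameters to force the tail and head images to coincide; for chain $\Rightarrow$ linear the paper takes the longest common prefix $\bdeta$ of two adjacent depth-$n$ paths, notes the suffixes must be highest/lowest, and pushes the chain witness forward by $g_{\bdeta}$---which is precisely your ``same-parent/straddle'' dichotomy collapsed into one step. Your anticipated bookkeeping obstacle (that adjacent paths decompose as common prefix followed by adjacent edges followed by highest/lowest tails) is exactly what the paper records as items (i) and (ii), so there is no hidden difficulty beyond what you have already identified.
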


To `see' a space-filling curve,  we need to visualize or to approximate a space-filling curve.
Using linear GIFS, in Section \ref{sec-visual}, we give a precise definition of visualizations  of a space-filling curve.
%This is another main contribution of the present paper.

To illustrate our theory, we give a brief introduction to the path-on-lattice IFS in Section \ref{sec-path}.
%Many well-known space-filling curves can be  explained by path-on-lattice IFS.
A nice collection of space-filling curves given by path-on-lattice IFS,  many of them are well-known, can be
found in the website \cite{Ventrella}.  A detailed study of the path-on-lattice IFS
can be found in \cite{Yang16}.

To find the linear GIFS structure of a given self-similar set is a hard question.
This question is studied in  sequential papers \cite{Dai15} and \cite{RZ14}.
We show that

\begin{thm} (\cite{Dai15} and \cite{RZ14})
 Let $K$ be a connected self-similar set satisfying the open set condition.
 if $K$ has the finite skeleton property, then it admits  optimal parametrizations. In particular,  if $K$  satisfies  a finite type condition (another important condition in
fractal geometry), then it possesses  finite skeletons and hence admits optimal parameterizations.
 \end{thm}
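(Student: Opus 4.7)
The plan is to reduce the whole claim to Theorem \ref{M-theo1}: if from the finite skeleton hypothesis one can extract a linear GIFS whose invariant sets comprise $K$ and its similar pieces, then Theorem \ref{M-theo1} automatically supplies the desired optimal parametrization. The problem thereby becomes purely combinatorial, namely to construct a graph-directed iterated function system equipped with an order for which consecutive cylinders intersect.

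First I would formalize what a skeleton of a piece $f(K)$ should mean: a finite ordered sequence of sub-cylinders whose union covers $f(K)$ and in which consecutive cylinders meet. Such a decomposition exists because $K$ is connected and the open set condition forces the pieces to overlap only on a small boundary set; a chain can be read off any spanning path of the intersection graph of the first-level cylinders. The finite skeleton property then says that as one descends the IFS tree only finitely many such skeletons appear up to similarity. I would organize these similarity classes as the vertices of a directed graph and, for each vertex, record the similarity maps that take the parent type to the child types dictated by its skeleton. The order on each skeleton supplies a total order on the outgoing edges, producing an ordered GIFS.

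Next I would verify the hypotheses of Theorem \ref{M-theo1} for this ordered GIFS. The open set condition for the GIFS and the positivity and finiteness of $\mathcal{H}^{\delta}$ on each invariant set descend from the open set condition on $K$ and the coincidence of the similarity and Hausdorff dimensions. The crucial remaining point is the chain condition of Theorem \ref{theo-chain}, requiring that at every depth, consecutive cylinders in the dictionary order meet. This is where the main obstacle lies: one must propagate the ``consecutive pieces meet'' property from the first-level skeleton of each vertex down through all iterations. The plan is to arrange matters so that whenever $A$ and $B$ are consecutive cylinders meeting at a point $p$, $p$ coincides with the tail of the skeleton of $A$ and the head of the skeleton of $B$; finiteness of the skeleton catalogue is exactly what allows this matching to be carried out uniformly across the graph, so that the chain condition is preserved under iteration.

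For the ``in particular'' clause, I would exploit the finite type condition to bound the combinatorics directly. Finite type furnishes only finitely many neighbour configurations up to similarity, so the intersection graph of the cylinders inside any piece has only finitely many isomorphism types. Connectedness guarantees that this intersection graph is connected and hence admits a spanning path giving a skeleton; since the types are finite, so are the skeletons up to similarity, and the finite skeleton property holds. The first part of the theorem then delivers the optimal parametrization.
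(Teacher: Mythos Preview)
The paper does not actually prove this theorem: it is quoted from the sequel papers \cite{Dai15} and \cite{RZ14}, and only an illustrative example (the four-star tile in Section~\ref{sec-four-star}) is worked out here. So there is no proof in the present paper to compare against. That said, your high-level strategy --- manufacture a linear GIFS from the skeleton data and then invoke Theorem~\ref{M-theo1} --- is exactly the reduction the authors intend.

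Where your sketch diverges from what the paper indicates, and where it has a real gap, is in the combinatorics. You define a skeleton as ``a finite ordered sequence of sub-cylinders whose union covers $f(K)$ and in which consecutive cylinders meet,'' and you propose to obtain it from a \emph{spanning path} of the intersection graph of first-level cylinders. But the intersection graph of cylinders need not be Hamiltonian, so a spanning path need not exist; this step can simply fail. The paper's notion of skeleton (see Section~\ref{sec-four-star}) is different: it is a finite set of \emph{points} $\{a_1,\dots,a_m\}\subset K$ playing the role of vertices, and the substitution rule is obtained by finding an \emph{Eulerian path} in the graph $\bigcup_j S_j(P)$, where $P$ is the polygon on the skeleton. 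The states of the resulting linear GIFS are the edges of $P$ (six in the four-star example), not the cylinders $S_j(K)$. This Eulerian-path construction is what makes the chain condition automatic, and it is also why ``finite skeleton'' is the right finiteness hypothesis: one needs only finitely many edge-types under iteration, not finitely many cylinder-chain types.

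Your argument for the ``in particular'' clause is also too quick: you assert that finitely many neighbour configurations forces finitely many skeletons, but since your notion of skeleton is not the one being used, this implication would have to be re-argued for the point-set skeleton, and that is precisely the content of \cite{RZ14}.
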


 Our theory gives a universal algorithm to find space-filling curves of self-similar set of finite type, that is, as soon as the IFS is given, the computer will do everything.
  Our study extends almost all the known results on space-filling curves, and shows the internal relation
  between the space-filling curve and the recent developments of fractal geometry.
 
\begin{figure}[h]
      \includegraphics[width=0.45\textwidth]{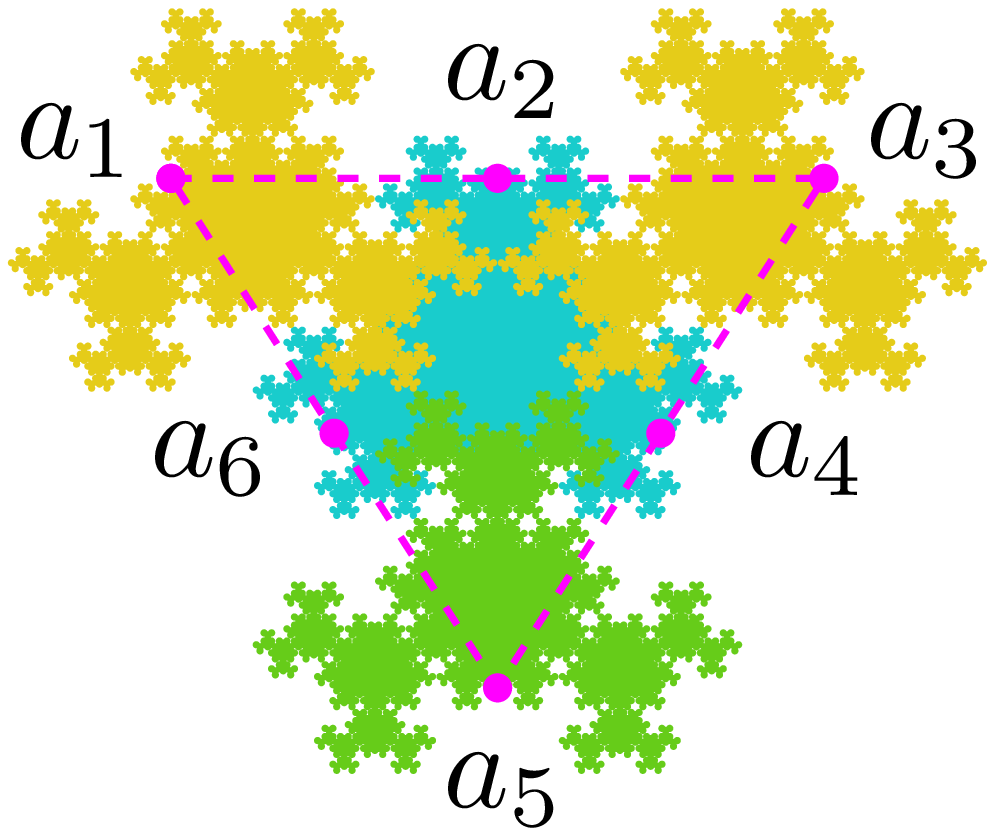}
    \includegraphics[width=0.4\textwidth]{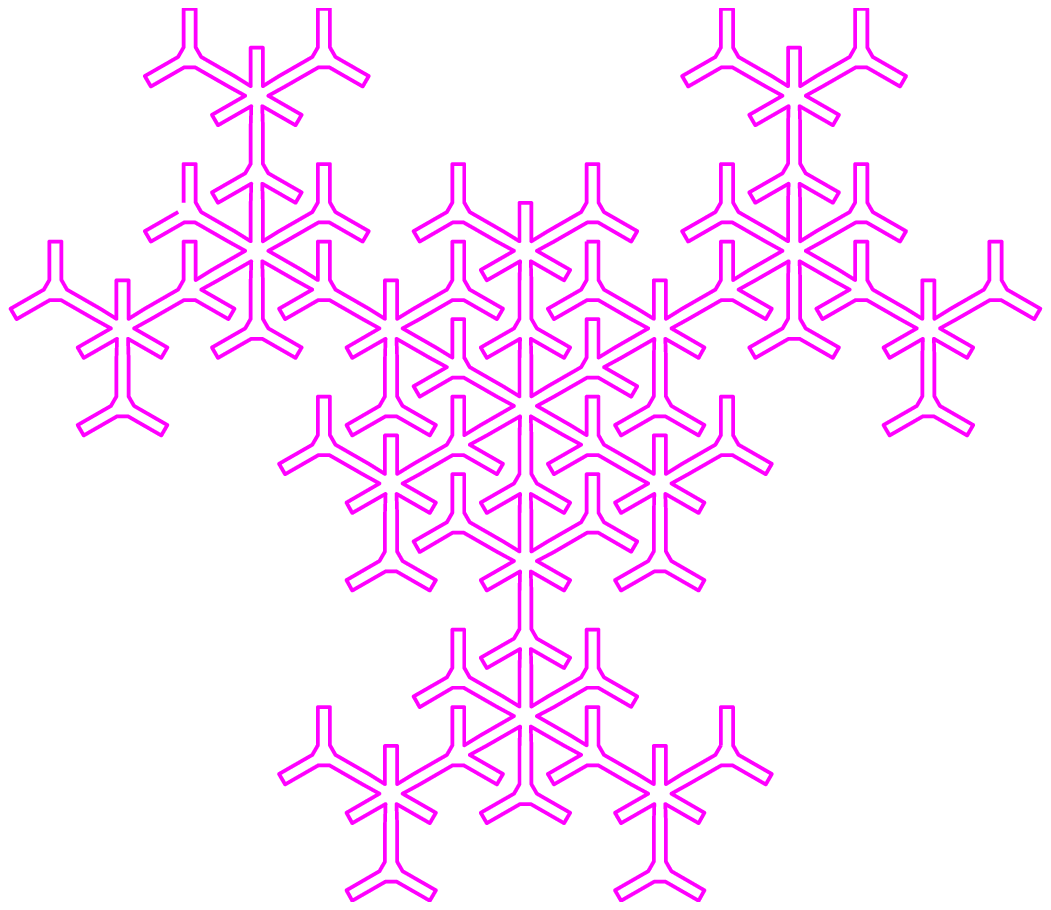}
  \caption{ The  four-star tile and a space-filling curve. }
  \label{4star}
\end{figure}

\begin{example}{ The four-star tile. \rm
Pictures in Figure \ref{4star} are taking from \cite{Gary}, but there is no explanation how to obtain the space-filling curve.
 Our study will fill all the gaps from the left picture to the right in Figure \ref{4star}, which is interesting and highly non-trivial (\cite{Dai15}).
 A sketch of the approach is provided in   Section \ref{sec-four-star}.}
\end{example}

 The paper is organized as follows.
 In Section \ref{sec-basic}, we define optimal parametrization for general compact sets.
We introduce the  linear GIFS and the chain condition in Section \ref{sec-GIFS} and Section \ref{sec-chain}, respectively.
 Section \ref{sec-path} is devoted to the path-on-lattice IFS on the plane.
  Visualizations of  space-filling curves are discussed in Section \ref{sec-visual}.
 Section \ref{sec-four-star} studies the four-star tile.
In Section \ref{sec-proof}, we prove Theorem \ref{M-theo1}, using a measure-recording GIFS.

\section{\textbf{Optimal parameterizations of self-similar sets}}\label{sec-basic}
 Let $K\subset \R^d$ be a non-empty compact set. We call $K$ a self-similar set, if it is a union of small copies of itself, precisely, there exist similitudes $S_1,\dots, S_N: \R^d\to \R^d$ such that
$$
K=\bigcup_{j=1}^N S_j(K).
$$
In fractal geometry, the family $\{S_1,\dots, S_N\}$ is called an \emph{iterated function system}, or IFS in short;  $K$ is called the \emph{invariant set} of the IFS \cite{Hut81, Fal90}.
We denote by ${\mathcal H}^s$ the $s$-dimensional Hausdorff measure.
% and ${\mathcal H}^s|_K$ its restriction on $K$.
A set $E\subset \R^d$ is called  an \emph{$s$-set}, if
$0<{\mathcal H}^s(E)<\infty$  for some $s\geq 0$.

The IFS $\{S_1,\dots, S_N\}$ is said to satisfy the \emph{open set condition (OSC)}, if there is an open set $U$ such that
$\bigcup_{i=1}^N S_i(U)\subset U$ and the sets $S_i(U)$ are disjoint.
It is well-known that, if a self-similar set $K$ satisfies the {open set condition}, then it is an $s$-set. (See \cite{Fal90}.)

\begin{remark}\label{Schief}{\rm
If an IFS satisfies the OSC condition, and $\dim_H K$ equals the space dimension, then $K$ has non-empty interior (\cite{Shief94}),
and it is a self-similar tile. Especially, if the contraction ratios of $S_i$ are all equal to $r$, then $K$ is called a
\emph{reptile}. (In this case, we must have $r=1/\sqrt[d]{N}$, where $d$ is the dimension of the space.)
}
\end{remark}

Motivated by the  studies of the space-filling curves,
it is natural to define an optimal parametrization of more general sets (see \cite{Dai}).
Denote ${\mathcal L}$ the one-dimensional Lebesgue measure.

\begin{defi} {\rm Let $K\subset {\mathbb R}^d$ be an $s$-set.
% and denote $\mu={\mathcal H}^s|_K$.
 An  onto mapping   $\psi:[0,1]\rightarrow K$ is called an
\emph{optimal parametrization} of $K$ if the following three conditions are fulfilled.
\begin{enumerate}
  \item[($i$)]  $\psi$ is almost one-to-one, precisely, there exist $K'\subset K$ and $I'\subset [0,1]$ such that
  ${\mathcal H}^s(K\setminus K')={\mathcal L}([0,1]\setminus I')=0$ and $\psi:~I'\to K'$ is a bijection;

  \item[($ii$)] $\psi$ is measure-preserving in the sense that
  $$
  {\mathcal H}^s(\psi(F))=c{\mathcal L}(F) \text{ and } {\mathcal L}(\psi^{-1}(B))=c^{-1}{\mathcal H}^s(B),
  $$
  for any Borel set $F\subset [0,1]$ and any Borel set $B\subset K$, where $c={\mathcal H}^s(K)$.
  \item[($iii$)] $\psi$ is $1/s$-H\"older continuous, that is, there is a constant $c'>0$ such that
  $$
  |\psi(x)-\psi(y)|\leq c'|x-y|^{\frac{1}{s}} \  \text{ for all } x,y\in [0,1].
  $$

\end{enumerate}
}
\end{defi}

Our main concern  is: Does every connected self-similar set admit an optimal parametrization?
 According to the theorem of Mazurkiewicz-Hahn (\cite{Hans94}),
a set is the image of $[0,1]$ under a continuous mapping if and only if it is
compact, connected, and locally connected. We note that a connected self-similar set fulfills these conditions, since a self-similar set is locally connected as soon as it is connected (\cite{Hata85}).

\begin{figure}[h]
  % Requires \usepackage{graphicx}
   \includegraphics[width=0.35\textwidth]{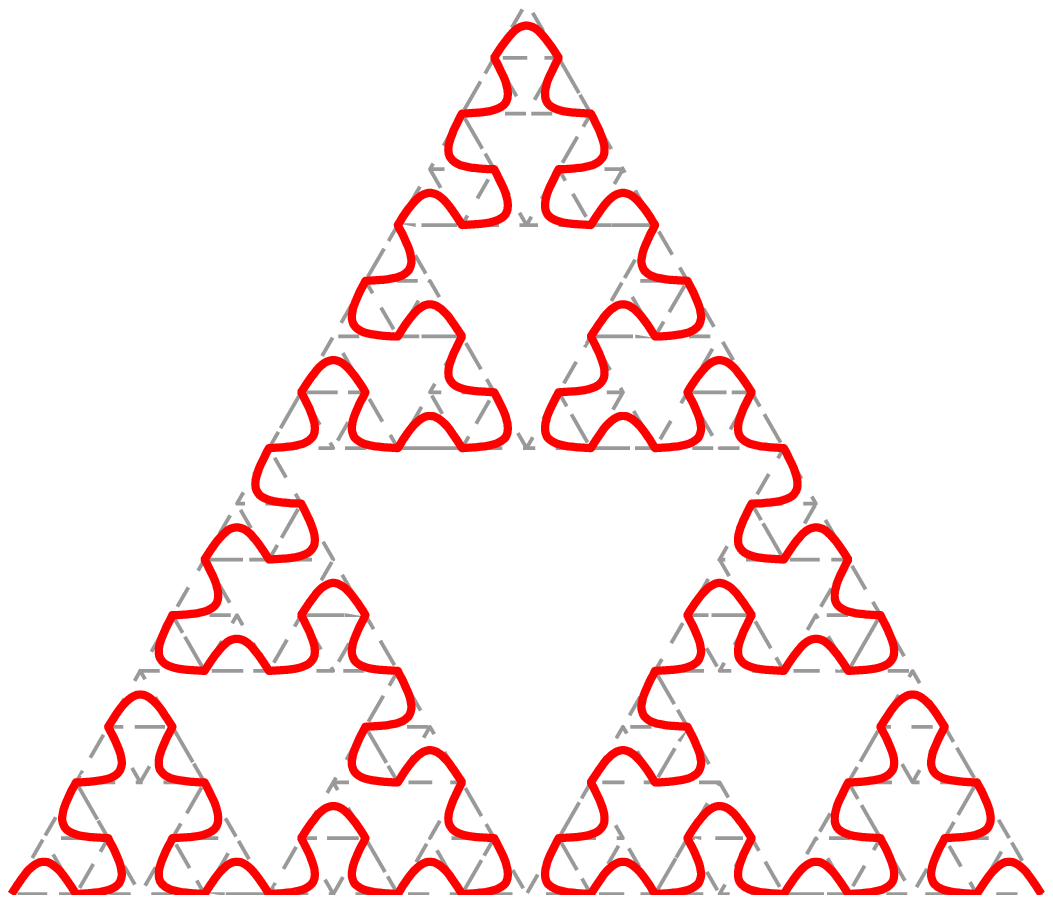}
   \includegraphics[width=.35 \textwidth]{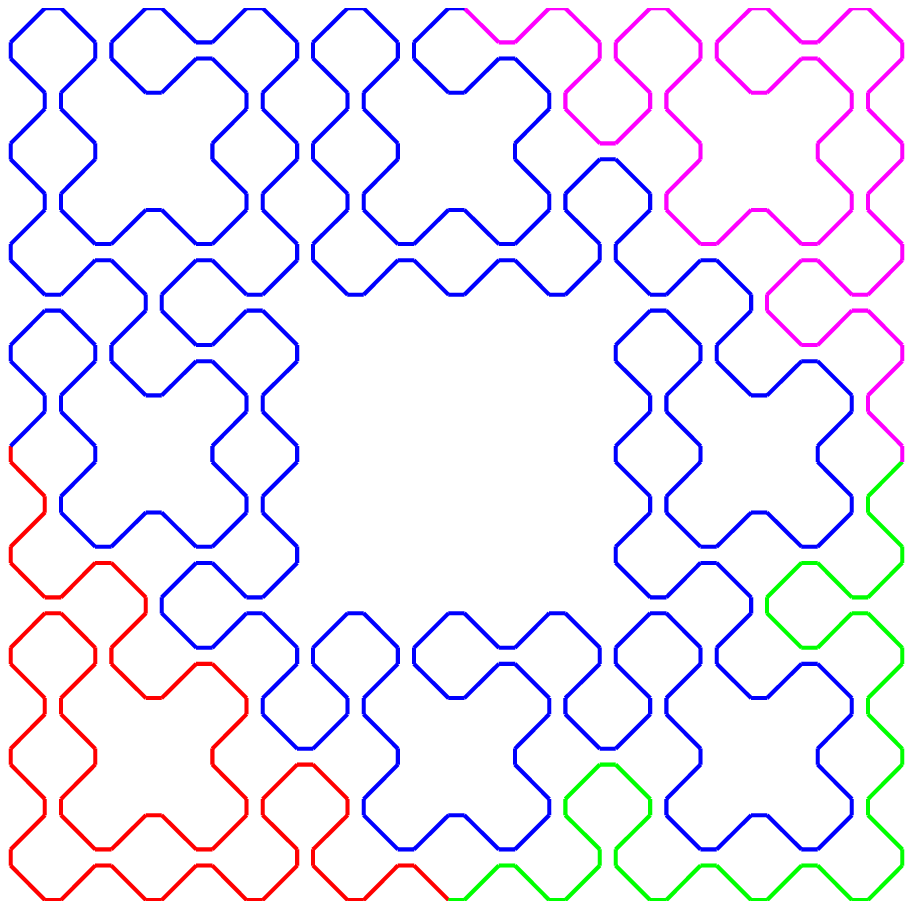}\\
 \caption{Optimal parameterizations of the Sierpi\'nski gasket and carpet. A detailed study of the carpet is carried out in \cite{Dai15}.}
 \label{fractal}
\end{figure}

For parameterizations of fractal sets, the previous studies  focused on the H\"older continuity.
%That is, for a connected self-similar set $K$, does there exist a $1/s$-H\"older
%continuous onto mapping $\varphi: [0,1]\to K$, where $s=\dim_H K$ is the Hausdorff dimension of $K$.
de Rham \cite{deRham57}, Hata \cite{Hata85} and Remes \cite{Remes98} showed the existence of
$1/s$-H\"older continuous parameterizations for certain classes of self-similar sets.
Akiyama and Loridant \cite{Akiyama10,Akiyama11} showed the existence when $K$ is the boundary of a class of self-affine
tiles (their motivation is to provide an alternative way to show the disk-like property of some planar
tiles).
 Mart\'in and  Mattila \cite{MM2000} gave some negative results when $K$ is  disconnected.
%Sirvent \cite{Sirvent2008,Sirvent2012} studied the relation between fractal-filling curves and geodesic laminations.
%Indeed,  most of the H\"older continuous parameterizations appeared in the above literature,
%are optimal parameterizations.

%% Part I: Basic theory of linear GIFS;
 %%%%%%%%%%%%%%%%%%%%%%%%%%%%%%%%%%%%
\section{\textbf{Linear GIFS}}\label{sec-GIFS}

%%%%%%%%%%%%%%%%%%%%%%%%%%%%%%    GIFS  %%%%%%%%%%%%%%%%%%%%%%%%%%%%%%%%%%%%%%
In this section,  we introduce  the notion of linear GIFS.

Let us start with the definition of GIFS. Let $G=(\mathcal{A},\Gamma)$ be a directed graph with vertex set $\mathcal{A}$ and edge set $\Gamma$. Let
 $${\mathcal G}=\left (g_{\boldsymbol \gamma}:\mathbb{R}^d\rightarrow \mathbb{R}^d \right )_{{\boldsymbol \gamma}\in \Gamma}$$
 be a family of similitudes. We call the triple $(\mathcal{A},\Gamma,{\mathcal G})$, or simply ${\mathcal G}$,  a \emph{graph-directed iterated function system} (GIFS).
 We  call $({\mathcal A}, \Gamma)$ the \emph{base graph} of the GIFS.
 %In what follows, we shall call  ${\mathcal A}$ a \emph{state set} instead of a vertex set, to avoid confusion
 %with  graphs in Euclidean space we introduce later.
 Very often but not always,  we set ${\mathcal A}$ to be $\{1,\dots, N\}$.

Let $\Gamma_{ij}$ be the set of edges from state $i$ to $j$. It is well known that there exist unique non-empty compact sets $\{E_i\}_{i=1}^N$ satisfying
\begin{equation}\label{uniset}
E_i=\bigcup_{j=1}^N\bigcup_{{\boldsymbol \gamma}\in \Gamma_{ij}}g_{{\boldsymbol \gamma}}(E_j),\quad 1\leq i\leq N.
\end{equation}
We call $\{E_j\}_{j=1}^N$ the \emph{invariant sets} of the GIFS (\cite{MW88}\cite{Bedford86}).

We say the above GIFS satisfies the \emph{open set condition} (OSC), if there exist open sets $U_1,\dots,U_N$ such that
$$
\bigcup_{j=1}^N\bigcup_{\gamma\in\Gamma_{ij}}g_{\gamma}(U_j)\subset U_i,\quad 1\leq i\leq N,
$$
and the left-hand sides are non-overlapping unions (\cite{MW88}\cite{Fal97}).

\begin{remark}{\rm
 It is seen that the set equations \eqref{uniset} give all the information of a GIFS,
 and hence provide an alternative  way to define a GIFS.  We shall call \eqref{uniset} the \emph{set equation form} of a GIFS.
 }
\end{remark}

%%%%%%%%%%%%%%%%%%%%%%%%%%%%%%%%%%%%%symbolic space %%%%%%%%%%%%%%%%%%%%%%%%%%%%%%

%%%%%%%%%%%%%%%%%%%%%%%%%%%%%%%%%%%%%symbolic space %%%%%%%%%%%%%%%%%%%%%%%%%%%%%%%
\subsection{Symbolic space related to a graph $G$} Let $G$ be a directed-graph.
 A sequence of edges in $G$, denoted by $\bomega=\omega_1\omega_2\dots\omega_n$, is called a \emph{path}, if the
  terminate state of $\omega_i$ coincides with the initial state of  $\omega_{i+1}$ for $1\leq i \leq n-1$.
We will use the following notations to specify the sets of finite or infinite paths on $G=(\mathcal{A},\Gamma)$. For $i\in \mathcal{A}$, let
 $$
 \Gamma_i^k,\ \Gamma_i^\ast \text{ and } \Gamma_{i}^{\infty}
 $$
  be the set of
 all paths with length $k$, the set of all paths with finite length, and the set of all infinite paths,
  emanating  from the state $i$, respectively.
Note that $\Gamma_i^\ast=\bigcup_{k\geq 1}\Gamma_i^k$.

 For a sequence $\bomega=(\omega_k)_{k=1}^\infty$, set $\bomega|_n=\omega_1\omega_2\dots\omega_n$ be the prefix of $\omega$ of length $n$.
For an infinite path ${\boldsymbol \omega}=(\omega_n)_{n=1}^\infty\in \Gamma_i^\infty$,
we  call
$$[\omega_1\dots\omega_n]:=\{{\boldsymbol \gamma}\in \Gamma_i^\infty;~~{\boldsymbol \gamma}|_n=\omega_1\dots\omega_n\}$$
the \emph{cylinder} associated with $\omega_1\dots \omega_n$.

For a path ${\boldsymbol \gamma}=\gamma_1\dots \gamma_n$, we denote
$$
E_{\boldsymbol \gamma}:=g_{\gamma_1}\circ\cdots \circ g_{\gamma_n}(E_{t(\bgamma)}),
$$
where $t(\bgamma)$ denotes the terminate state of the path $\boldsymbol \gamma$ (also $\gamma_n$). Iterating \eqref{uniset} $k$-times, we obtain
\begin{equation}\label{uni-set3}
E_i=\bigcup_{{\boldsymbol \gamma}\in \Gamma_i^k}E_{\boldsymbol \gamma}.
\end{equation}

We define a projection $\pi: (\Gamma_1^{\infty},\dots,\Gamma_N^{\infty}) \rightarrow (\mathbb{R}^d,\dots,\mathbb{R}^d)$, where $\pi_i: \Gamma_i^{\infty} \rightarrow \mathbb{R}^d$ is defined by
\begin{equation}\label{eq-projection}
\{\pi_i({\boldsymbol \omega})\}:=\bigcap_{n\geq 1}E_{\boldsymbol {\omega|_{n}}}.
\end{equation}
 For $x\in E_i$, we call ${\boldsymbol \omega}$ a coding of $x$ if $\pi_i({\boldsymbol \omega})=x$. It is folklore that $\pi_i(\Gamma_{i}^{\infty})=E_i$.

\subsection{\textbf{Order GIFS and linear GIFS}}
Let $({\mathcal{A}},\Gamma,\mathcal{G})$ be a GIFS. To study the `advanced' connectivity property of the invariant sets,
 we equip a partial order on the edge set $\Gamma$ enlightened by set equation \eqref{uni-set3}.
Let $\Gamma_i=\Gamma_i^1$ be the set of edges emanating from the vertex $i$.
\begin{defi}
{\rm

We call the quadruple $({\mathcal{A}},\Gamma, \mathcal{G}, \prec)$ an \emph{ordered GIFS},
if  $\prec$ is a partial order on $\Gamma$ such that
\begin{enumerate}
  \item[($i$)] $\prec$ is a linear order when restricted on $\Gamma_j$ for every $j\in {\mathcal A}$;
  \item[($ii$)]  elements in $\Gamma_i \text{ and } \Gamma_j$ are not comparable if  $i \neq j$.
\end{enumerate}
}
\end{defi}
We denote the edges in $\Gamma_i$ by $\gamma_{i,1},\gamma_{i,2},\dots,\gamma_{i,\ell_i}$ in an ascending order. For simplicity, we use the following equations to describe an order GIFS (equation form of an ordered GIFS):
$$
E_i= g_{\gamma_{i,1}}(E_{t(\gamma_{i,1})})+ g_{\gamma_{i,2}}(E_{t(\gamma_{i,2})})+\dots+
g_{\gamma_{i,\ell_i}}(E_{t(\gamma_{i,\ell_i})}), \quad i=1,\dots, N,
$$
where we use `$+$' instead of `$\cup$' to emphasize the order.

%%%%%%%%%%%%%%%%%%%linear%%%%%%%%%%%%%
 The  order $\prec$  induces a \emph{dictionary order} on each
  $\Gamma_i^k$,
   %(which we still denote by $\prec$),
   namely,
$
\gamma_1\gamma_2\dots \gamma_k \prec \omega_1\omega_2\dots \omega_k
$
if and only if $ \gamma_1\dots \gamma_{\ell-1} = \omega_1\dots \omega_{\ell-1}$ and $\gamma_\ell\prec \omega_\ell$ for some $1\leq \ell \leq k$.
 %where $\ell=\min\{i;~~~\gamma_i\neq \omega_i\}$.
  Observe that $(\Gamma_i^k, \prec)$ is a linear order. Now we can define the linear GIFS.
  % which plays an important role in this paper.

\begin{defi}{\rm Let $({\mathcal{A}},\Gamma,\mathcal{G},\prec)$ be an ordered GIFS with invariant sets $\{E_i\}_{i=1}^N$.
 It is termed a \emph{linear} GIFS,  if for all $i\in {\mathcal A}$ and $k\geq 1$,
 $$
 E_{\boldsymbol \gamma}\cap E_{\boldsymbol \omega}\neq \emptyset
 $$
  provided ${\boldsymbol \gamma}$ and ${\boldsymbol \omega}$ are adjacent paths in
  $ \Gamma_{i}^k$.}
\end{defi}

%\begin{remark}{\rm After we finish this paper, we acknowledge that an idea similar to our linear GIFS has appeared in Akiyama and Loridant \cite{Akiyama11}.
% They investigate only the H\"older continuity, since in their setting, the mappings in the GIFS may not be similitudes.
%}
%\end{remark}

%As a simple example, the IFS of Koch curve is a linear IFS. (See Figure \ref{Koch}.)

\subsection{Linear IFS}
An IFS $\{S_1,\dots, S_N\}$ is a special class of GIFS, where the vertex set is a singleton, and the edge set consists of $N$ self edges which we denote by $1,\dots, N$. The IFS becomes an \emph{ordered IFS}, if we assume the natural order $1\prec 2\prec \cdots \prec N$.

 \begin{remark}{\rm For an IFS, the associated symbolic space is much simpler. Let $\Sigma=\{1,\dots,N\}$. For $m\geq1$, we denote $\Sigma^m=\{1,2\cdots N\}^m$,
%(we set $\Sigma^0=\{\emptyset\}$ for convention),
$\Sigma^\ast=\bigcup_{m=0}^\infty{\Sigma^m}$, and $\Sigma^\infty=\{1,2\cdots N\}^{\mathbb{N}}$.
 For convention,  instead of calling
 $\omega_1\dots \omega_n\in \Sigma^n$ a path, we call it a word.
  For $i_1i_2\dots i_m\in \Sigma^*$, we  call
$[i_1i_2\dots i_m]=\{\bomega\in \Sigma^\infty ~;~~ \bomega|_m=i_1i_2\dots i_m\}$
a \emph{cylinder}.
%The shift map $\sigma$ on $\Sigma^\infty$ is defined by
%$\sigma((\omega_k)_{k=1}^\infty)=(\omega_k)_{k=2}^\infty.$

Denote by $K$ the invariant set of the  IFS.
The \emph{projection map} $\pi:\Sigma^\infty\rightarrow K$ is
$$\{\pi(\bomega)\}=\bigcap_1^{\infty} S_{\bomega|_m}(K),$$
where $S_{i_1i_2 \dots i_m}=S_{i_1}\circ S_{i_2} \circ  \cdots \circ S_{i_m} $.
%Clearly $\pi$ is a surjection.
 We call $\bomega$ a \emph{coding} of $x$ if $\pi(\bomega)=x$.
}
\end{remark}

Clearly, the von Koch curve is generated by a linear IFS.
%The Levy Dragon is generated by the IFS
% $$\phi_1(z)=\frac{1-i}{2}z,~\phi_2(z)=-\frac{1+i}{2}z+(1+i).$$
% It is also a linear IFS by Theorem \ref{theo-chain}.
In Section \ref{sec-path}, we shall show that
the Peano curve is generated by a linear IFS,
while the Heighway dragon and the Hilbert curve are generated by linear GIFS'.

\begin{example}\label{ex-Sier} { Sierpi\'nski curve. \rm Let $T_1, T_2, T_3$ and $T_4$ be a partition of the unit square indicated by Figure \ref{SCurve}(right).
In Figure \ref{SCurve}(left),  $T_j$, $1\leq j\leq 4$, is divided into $4$ small triangles, where the numbers indicate the order of the small triangles. The four small triangles are images of $T_j$ under a map of the form $(x+b)/2$. Hence, we obtain
a linear GIFS. Precisely,
$$
T_1=\frac{T_1}{2}+\frac{T_2}{2}+\frac{T_4+1}{2}+\frac{T_1+1}{2}.
$$
Similarly equations can be obtained for $T_2,T_3$ and $T_4$.
\begin{figure}[h]
  % Requires \usepackage{graphicx}
  \includegraphics[width=0.38\textwidth]{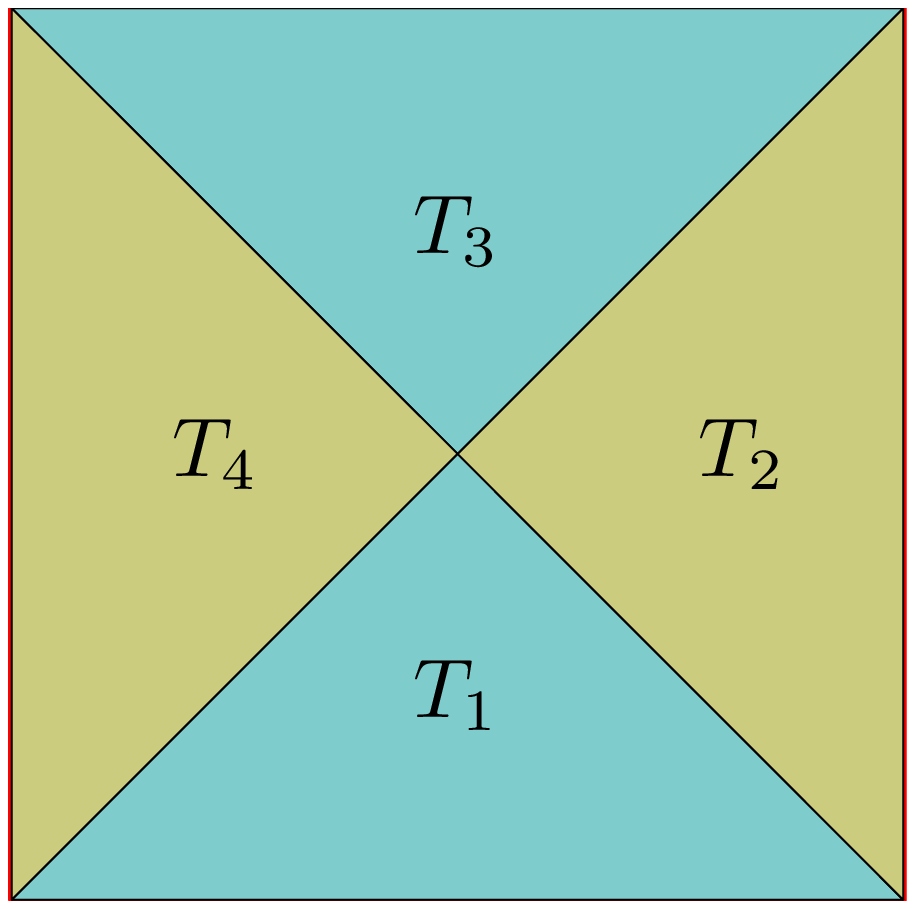}\quad
  \includegraphics[width=0.4\textwidth]{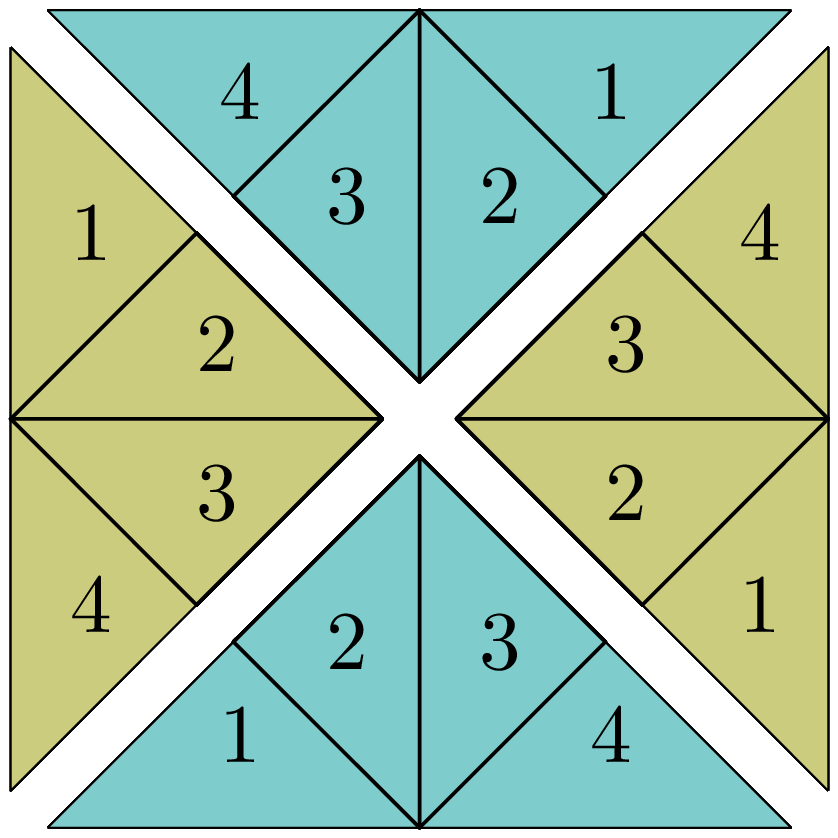}\\
  \caption{A linear GIFS which  generates the Sierpinski curve.}\label{SCurve}
\end{figure}

}
\end{example}

\section{\textbf{The chain condition, proof of Theorem \ref{theo-chain}}}\label{sec-chain}
%We show
%that the linear GIFS is completely characterized by the chain condition.

Let $(\mathcal{A},\Gamma,{\cal G}, \prec)$ be an ordered GIFS. Denote the invariant sets by $\{E_i\}_{i\in {\mathcal A}}$.
For an edge $\omega \in \Gamma$, recall that $g_\omega$ is the associated similitude and $t(\omega)$ is the terminate state.

For $i\in \mathcal{A}$, a path $\bomega\in\Gamma_i^{\infty}$ is called the \emph{lowest} path, if $\bomega|_n$ is the lowest path in $\Gamma_i^n$ for all $n$; in this case, we call $a=\pi_i(\bomega)$ the \emph{head} of $E_i$.
%Clearly, the lowest path (highest path) is unique.
Similarly, we define the highest path $\bomega'$ of $\Gamma_i^\infty$, and we call $b=\pi_i(\bomega')$ the
  the \emph{tail} of $E_i$.

\begin{defi}{\rm
An ordered GIFS is said to satisfy the \emph{chain condition}, if for any $i\in {\mathcal A}$, and any two adjacent edges
$\omega, \gamma\in \Gamma_i$ with $\omega \prec \gamma$,
$$g_\omega(\text{tail of } (E_{t(\omega)})=g_\gamma(\text{ head of }E_{t(\gamma)}).$$
}
\end{defi}

 Clearly, for an ordered IFS $\{S_1,\dots, S_N\}$,
 the lowest coding is $1^\infty$ and the highest coding is $N^\infty$. Therefore, the head of
 $K$ is the fixed point of $S_1$, denoted by $Fix(S_1)$,  and
 the tail of $K$ is $Fix(S_N)$. Consequently, the chain condition hold if and only if
\begin{equation}\label{chain-cond}
S_{i+1}( \text{Fix}(S_1))=S_{i}(\text{Fix}(S_N)) \text{ for }  i=1,2,\dots,N-1,
\end{equation}

Condition \eqref{chain-cond} first appeared in Hata \cite{Hata85}, when dealing with
 the  $1/s$-H\"older continuous  parametrization of self-similar sets.

\begin{proof}[\textbf{Proof of Theorem \ref{theo-chain}.}] Suppose $(\mathcal{A},\Gamma,{\cal G}, \prec)$ satisfies the chain condition. Let $i\in \mathcal{A}$,
and let $\bomega$ and $\bgamma$ be two adjacent paths in $\Gamma_i^n$ with $\bomega \prec \bgamma$. Let
$\bdeta=\bomega\wedge \bgamma$ be the largest common prefix of $\bomega$ and $\bgamma$. Then $\bomega$ and $\bgamma$ can be written as $\bomega=\bdeta \omega_{k+1}\dots \omega_n$ and $\bgamma=\bdeta \gamma_{k+1}\dots \gamma_n$.
The fact $\bomega$ and $\bgamma$ are adjacent implies that

(i) $\omega_{k+1}$ and $\gamma_{k+1}$ are adjacent edges  in $\Gamma_j$ where $j=t(\bdeta)$,
and $\omega_{k+1}\prec \gamma_{k+1}$;

 (ii) $\omega_{k+2}\dots \omega_n$ is the highest path in $\Gamma_{t(\omega_{k+1})}^{n-k-1}$ and $\gamma_{k+2}\dots \gamma_n$ is the lowest path in $\Gamma_{t(\gamma_{k+1})}^{n-k-1}$.\\
By item (ii),  we have
$b=\big(\text{tail of }E_{t(\omega_{k+1})}\big)\in E_{\omega_{k+2}\dots \omega_n}$ since the coding of $b$ is initialled
by $\omega_{k+2}\dots \omega_n$. Hence
$$g_{\omega_{k+1}}(\text{tail of }E_{t(\omega_{k+1})})\in  E_{\omega_{k+1}\dots \omega_n}.$$
Similarly,
$$g_{\gamma_{k+1}}(\text{head of }E_{t(\gamma_{k+1})})\in E_{\gamma_{k+1}\dots \gamma_n}.$$
Therefore $E_{\omega_{k+1}\dots \omega_n}\cap E_{\gamma_{k+1}\dots \gamma_n}\neq\emptyset$ by the chain condition. So
$$
E_{\bomega}\cap E_{\bgamma}=g_{\bdeta}(E_{\omega_{k+1}\dots \omega_n}\cap E_{\gamma_{k+1}\dots \gamma_n})\neq\emptyset,
$$
which proves that  the GIFS is linear.

On the other hand, assume that $({\mathcal{A},\Gamma,{\cal G},\prec})$ is a linear GIFS. Fix $i\in \mathcal{A}$. Let $\omega_1$ and $\gamma_1$ be  adjacent edges in $\Gamma_i$ satisfying $\omega_1\prec \gamma_1$.
Let $(\omega_k)_{k=2}^\infty$ be the highest path in $\Gamma_{t(\omega_1)}^{\infty}$ and $(\gamma_k)_{k=2}^\infty$ be the lowest path in $\Gamma_{t(\gamma_1)}^{\infty}$.
Denote $$\bomega|_{k}=\omega_1\dots\omega_k \text{ and } \bgamma|_{k}=\gamma_1\dots\gamma_k, $$ then for all $k\geq 1$, $\bomega|_{k}$ and $\bgamma|_{k}$ are adjacent path in $\Gamma_i^k$ and so $E_{\bomega|_{k}}\cap E_{\bgamma|_{k}}\neq \emptyset$.
As we know that
$$g_{\omega_1}(\text{tail of }E_{t(\omega_1)})=\pi_v((\omega_p)_{p\geq 1})\in E_{\bomega|_k},$$
$$g_{\gamma_1}(\text{head of }E_{t(\gamma_1)})=\pi_v((\gamma_p)_{p\geq 1})\in E_{\bgamma|_k}$$
 for all $k\geq 1$ , so
the distance between $g_{\omega_1}(\text{tail of }E_{t(\omega_1)})$ and $g_{\gamma_1}(\text{head of }E_{t(\gamma_1)})$ can be arbitrarily small. Thus
$$g_{\omega_1}(\text{tail of }E_{t(\omega_1)})=g_{\gamma_1}(\text{head of }E_{t(\gamma_1)}),$$
and the chain condition is verified. The theorem is proved.
\end{proof}

\begin{cor}\label{linearIFS} An ordered IFS $\{S_1,\dots, S_N\}$  is a linear IFS if and only if
 \eqref{chain-cond} holds.
\end{cor}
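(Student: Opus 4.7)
The plan is to derive this corollary directly from Theorem \ref{theo-chain} by specializing the ordered GIFS framework to the IFS case. An ordered IFS $\{S_1,\dots,S_N\}$ is precisely the ordered GIFS with vertex set $\mathcal{A}=\{\ast\}$, edge set $\Gamma=\{1,\dots,N\}$ (all self-loops at $\ast$), maps $g_i=S_i$, and linear order $1\prec 2\prec\cdots\prec N$. The invariant set equation reduces to the usual $K=\bigcup_i S_i(K)$, so the notions of linear GIFS and linear IFS coincide under this identification. Hence by Theorem \ref{theo-chain}, linearity is equivalent to the chain condition for this ordered GIFS, and all that remains is to translate the chain condition into the explicit form \eqref{chain-cond}.

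For the translation, I would first identify the head and tail of $K$. Since there is a single vertex, the symbolic space is $\Sigma^\infty=\{1,\dots,N\}^{\mathbb{N}}$ with the dictionary order induced by $1\prec\cdots\prec N$. The lowest infinite word is $1^\infty=111\cdots$ and the highest is $N^\infty$. By the definition of the projection $\pi$, we have
\[
\pi(1^\infty)=\bigcap_{m\geq 1} S_1^m(K)=\mathrm{Fix}(S_1),
\qquad
\pi(N^\infty)=\mathrm{Fix}(S_N),
\]
so the head of $K$ is $\mathrm{Fix}(S_1)$ and the tail is $\mathrm{Fix}(S_N)$.

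Next I would enumerate the adjacent pairs of edges. In $\Gamma=\{1,\dots,N\}$ equipped with the order $1\prec\cdots\prec N$, the adjacent pairs are exactly $(i,i+1)$ for $i=1,\dots,N-1$. The chain condition, applied to the pair $\omega=i,\ \gamma=i+1$, reads
\[
g_\omega(\text{tail of }E_{t(\omega)})=g_\gamma(\text{head of }E_{t(\gamma)}),
\]
and since $t(\omega)=t(\gamma)=\ast$ so that $E_{t(\omega)}=E_{t(\gamma)}=K$, this is exactly
\[
S_i(\mathrm{Fix}(S_N))=S_{i+1}(\mathrm{Fix}(S_1)),\qquad i=1,\dots,N-1,
\]
which is \eqref{chain-cond}. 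Combining these two observations with Theorem \ref{theo-chain} yields the corollary.

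There is no real obstacle here; the only thing to be slightly careful about is verifying that the dictionary order on the symbolic space $\Sigma^\infty$ (as used implicitly in the definitions of head and tail) matches the one induced by the partial order on edges under the GIFS/IFS identification, and that the lowest and highest infinite codings indeed project onto the fixed points of $S_1$ and $S_N$ respectively. Both facts are immediate from the definitions, so the proof amounts to a short unpacking of terminology followed by an invocation of Theorem \ref{theo-chain}.
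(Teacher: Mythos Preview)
Your proposal is correct and follows essentially the same approach as the paper: the paper notes just before stating the chain condition that for an ordered IFS the lowest and highest codings are $1^\infty$ and $N^\infty$, so the head and tail are $\mathrm{Fix}(S_1)$ and $\mathrm{Fix}(S_N)$, whence the chain condition reduces to \eqref{chain-cond}; the corollary is then immediate from Theorem \ref{theo-chain}, and no separate proof is given.
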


%Condition \eqref{chain-cond} first appeared in Hata \cite{Hata85}, when dealing with
 %the  $1/s$-H\"older continuous  parametrization of $K$.

  % Symbolic space; definition; chain condition;

\section{\textbf{Path-on-lattice IFS on the plane}}\label{sec-path}

In this section, we study the path-on-lattice IFS on the plane (which we denote by ${\mathbb C}$).
%(This method works for higher dimensional case, but it is   more involved.)

 Let $\L=\Z+i\Z$ be the square lattice or $\L=\Z+\omega \Z$ be the triangle lattice in the plane,
 where $\omega=\exp(2\pi i/3)$.
 We define two points in $\L$ to be neighbors if their distance is $1$.
   Then we obtain a graph and we still denote it by $\L$.
   %For simplicity, we identify $\R^2$ with $\mathbb C$ when handling the similitudes.

 Let $P$ be a path in $\L$ passing through the points
 $0=z_0, z_1,\dots, z_{n-1}, z_n=d$ in turn.
 Let $\Phi=\{\phi_k\}_{k=1}^{n}$ be an ordered IFS on $\mathbb{C}$ such that
 \begin{equation}\label{path ifs}
 \phi_k(\{0,d\})=\{z_{k-1}, z_{k}\}, \text{ for all } k=1,\dots,n.
 \end{equation}
 We call such $\Phi$ a \emph{path-on-lattice IFS} with respect to the path $P$.
 Clearly the mapping $\phi_k$ has the form
 $\phi_k(z)=\alpha z+\beta, \text{ or } \phi_k(z)=\alpha \bar z+\beta$
 with $\alpha,\beta\in {\mathbb C}$, and there are four choices of $\phi_k$ for each $k$. If we indicate the four mappings by line segments with a half-arrow, then the IFS can be described by a path consisting of marked line segments.
%%We take account of the order, and thus we actually obtain an order IFS.
 If all $\phi_k$ are of the form $\alpha z+\beta$, then we say $\Phi$ is
\emph{reflection-free}.

\begin{figure}[h]
  % Requires \usepackage{graphicx}
   \includegraphics[width=.35 \textwidth]{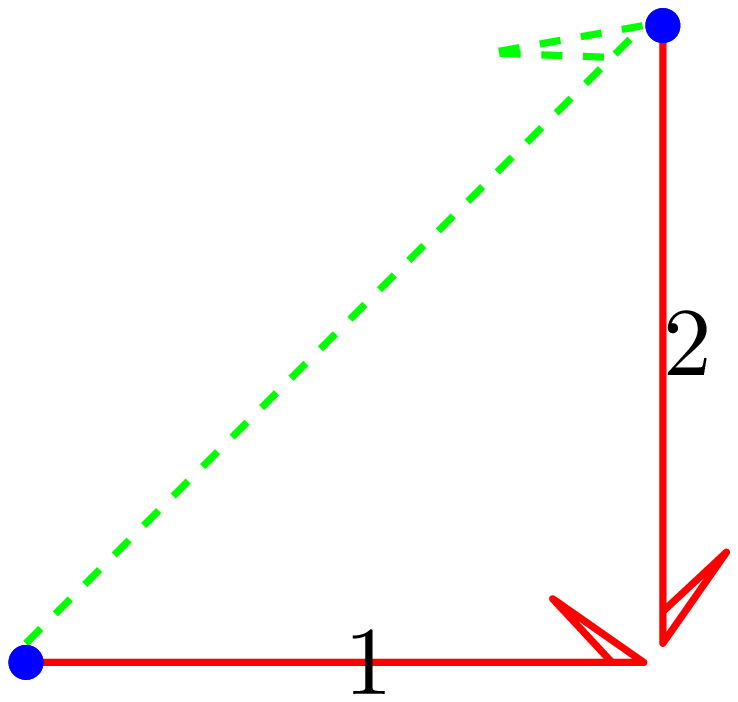}
  \includegraphics[width=0.33\textwidth]{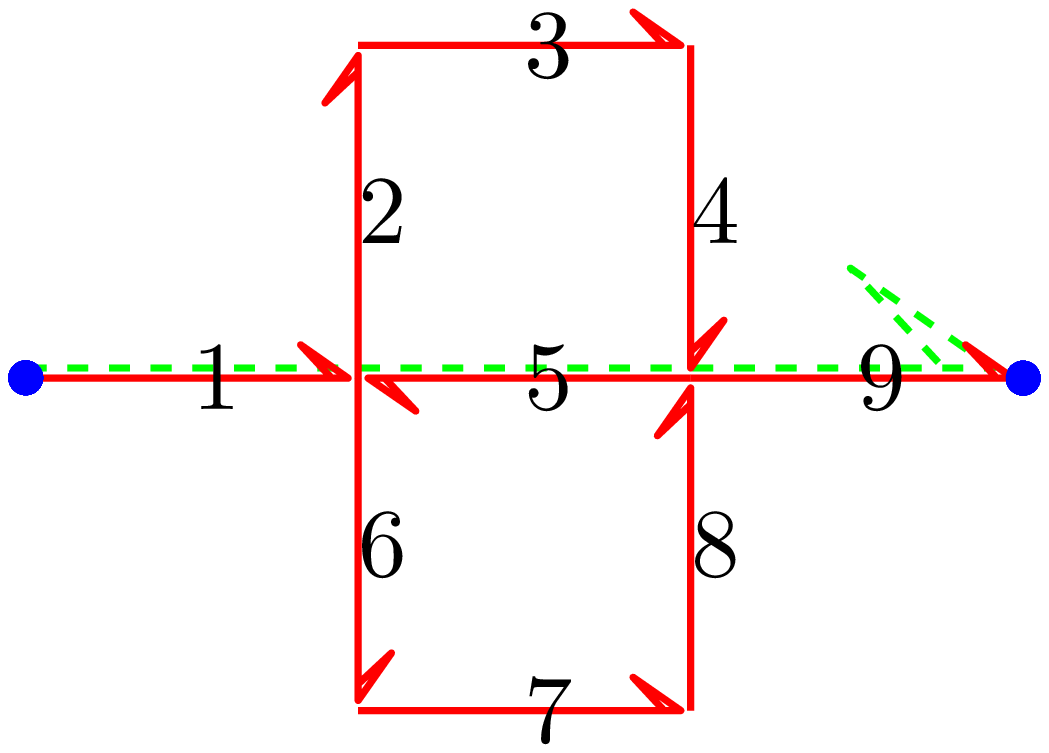}
  \includegraphics[width=0.28 \textwidth]{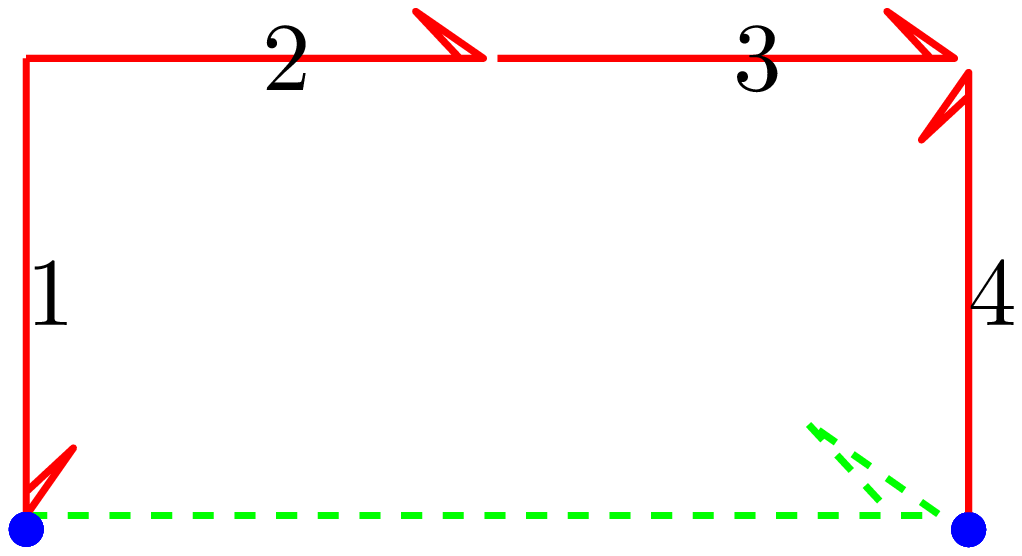}
  \caption{Paths for Heighway dragon curve, Peano curve and Hilbert curve.}\label{paths}
\end{figure}

 \begin{theorem}\label{theo-two-state} The path-on-lattice IFS is either a linear IFS,  or its invariant set
 can be generated by a linear GIFS with two states. Moreover, the linear GIFS satisfies the OSC if the original IFS
 does.
  \end{theorem}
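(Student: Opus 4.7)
The plan is to distinguish two cases by the orientation of the similitudes. Call $\phi_k$ \emph{forward} if $\phi_k(0)=z_{k-1}$ and $\phi_k(d)=z_k$, and \emph{reversed} if $\phi_k(0)=z_k$ and $\phi_k(d)=z_{k-1}$. If every $\phi_k$ is forward, then $\phi_1$ fixes $0$ and $\phi_n$ fixes $d$, so the head and tail of the IFS invariant set $K$ are $0$ and $d$, respectively. The chain condition of Corollary~\ref{linearIFS} then reads $\phi_{k+1}(0)=\phi_k(d)$, and both sides equal $z_k$; hence the IFS is linear.

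Otherwise, I would build a two-state ordered GIFS whose two invariant sets coincide with $K$ as sets but carry opposite orientations. Setting $j(k)=1$ when $\phi_k$ is forward and $j(k)=2$ when reversed, I define
$$E_1=\phi_1(E_{j(1)})+\phi_2(E_{j(2)})+\cdots+\phi_n(E_{j(n)}),$$
$$E_2=\phi_n(E_{3-j(n)})+\phi_{n-1}(E_{3-j(n-1)})+\cdots+\phi_1(E_{3-j(1)}).$$
As set equations both lines collapse to $K=\bigcup_k\phi_k(K)$, so $E_1=E_2=K$; the state simply records whether we intend to traverse $K$ from $0$ to $d$ or from $d$ to $0$, and a state switch occurs exactly when a local map reverses orientation.

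To apply Theorem~\ref{theo-chain} I would first pin down heads and tails. Chasing the lowest infinite path from $E_1$, which begins with $\phi_1$ and enters $E_{j(1)}$, the head of $E_1$ satisfies a fixed-point equation for $\phi_1$ if $j(1)=1$, or for a composition $\phi_1\circ\phi_n\circ\cdots$ if $j(1)=2$ (the exact composition depending on $j(n)$). A direct check shows that $0$ is a solution in every case, and contractivity gives uniqueness; hence head of $E_1=0$, and symmetrically tail of $E_1=d$, head of $E_2=d$, tail of $E_2=0$. The chain condition at the $k$-th joint of $E_1$ then reads $\phi_k(\text{tail of }E_{j(k)})=\phi_{k+1}(\text{head of }E_{j(k+1)})$; a four-way case analysis on $(j(k),j(k+1))$ shows both sides always equal $z_k$, because a forward map sends $d\mapsto z_k$ while a reversed map sends $0\mapsto z_k$. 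The verification for $E_2$ is symmetric under reversing the indexing.

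Finally, for the OSC I would take $U_1=U_2=U$, where $U$ is an OSC set for the original IFS; the GIFS open set condition at either vertex then reduces to $\bigcup_k\phi_k(U)\subset U$ (non-overlapping union), which is precisely the hypothesis. The main obstacle I foresee is the head/tail identification in the mixed-orientation case, specifically showing that the recursive lowest-path chase terminates at $0$ rather than at some spurious fixed point of an iterated composition; once that is handled, the remaining chain-condition verifications and the OSC are routine case analyses.
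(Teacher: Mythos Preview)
Your proposal is correct and follows essentially the same route as the paper: define the sign $v_k\in\{\pm1\}$ (your $j(k)$), build the two-state ordered GIFS with $E_1$ traversed forward and $E_{-1}$ traversed backward, identify heads and tails as $0$ and $d$, verify the chain condition joint-by-joint, and transfer the OSC via $U_1=U_2=U$. The obstacle you flag is disposed of in the paper by a clean four-case split on $(v_1,v_n)$: in each case the lowest path from state $1$ is periodic of period one or two, so the head is the fixed point of either $\phi_1$ or $\phi_1\circ\phi_n$, and one checks directly that this fixed point is $0$ (e.g.\ if $v_1=-1$ and $v_n=-1$ then $\phi_n(0)=z_n=d$ and $\phi_1(d)=z_0=0$).
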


 % The OSC of a GIFS is defined in Section \ref{proofM1}.

\begin{proof}
 Let $\{\phi_k\}_{k=1}^n$ be a path-on-lattice IFS defined by \eqref{path ifs}.
Let $K$ be the invariant set.

 For $k=1,\dots, n$, define
 \begin{equation}\label{vv}
 v_{k}=\left \{
 \begin{array}{rl}
 1,    &\text{ if } (\phi_{k}(0),\phi_{k}(d))=(z_{k-1}, z_{k}),\\
  -1,  &\text{ if } (\phi_{k}(0),\phi_{k}(d))=(z_{k}, z_{k-1}).
  \end{array}
\right .
\end{equation}
We define an ordered GIFS with two state $\{1,-1\}$ as follows:
\begin{equation}\label{dragon1}
\left \{
\begin{array}{l}
E_1= \phi_1(E_{v_{1}})+\dots+ \phi_n(E_{v_{n}}),\\
E_{-1}= \phi_n(E_{-v_{n}})+\dots + \phi_1(E_{-v_{1}}).
\end{array}
\right .
\end{equation}
(One may think that the first equation is corresponding the path $P$, and the second equation is corresponding to
the reverse path of $P$.) Clearly $E_{1}=E_{-1}=K$.

We shall show that the head and tail of $E_1$ are $0$ and $d$ respectively, and the head and tail of $E_{-1}$
are $d$ and $0$ respectively; then using \eqref{vv}, we deduce that the GIFS \eqref{dragon1}
satisfies the chain condition.
%Since we shall calculate the heads and the tails of $E_1$ and $E_{-1}$, we need only consider
According  to  $v_1=\pm 1$ and $v_n=\pm 1$,  we have four choices.

 Let us denote the $k$-th edge emanating from $E_i$ by $\lambda_{i,k}$.
 If $v_1=1$ and $v_n=1$, then the first edge emanating from the vertex $1$ is a self-edge, and hence $(\lambda_{1,1})^\infty$ is the lowest coding.
 It follows that the head of $E_1$ is $0$. Similarly, the highest coding emanating from vertex $1$
  is $(\lambda_{1,n})^\infty$, and so that the tail of $E_1$ is $d$.
 By the same argument,  the head of $E_{-1}$ is $d$ and
 the tail of $E_{-1}$ is $0$.

 The other three cases can be proved in the same manner. Moreover, if all $v_k$ equal  $1$, or all $v_k$ equal  $-1$, then
 GIFS \eqref{dragon1} degenerates to a linear IFS.

 As for the open set condition, if the original IFS satisfies the OSC with an open set $U$,
 then  GIFS \eqref{dragon1}  satisfies the OSC with open sets $\{U,U\}$.
 The theorem is proved.
\end{proof}

\begin{remark} {\rm Algorithms of checking the open set condition of path-on-lattice IFS are discussed in \cite{Yang16}.
For the path-on-lattice IFS \eqref{path ifs},
if $n=\|d\|^2$ and the OSC holds, then $K$ is a reptile (see Remark \ref{Schief}).
%A nice collection of space-filling curves coming from path-on-lattice IFS can be found in the web-site \cite{Ventrella}.
}
\end{remark}

\begin{example}\label{Ex-Peano} {\rm We give several space-filling curves generated by path-on-lattice IFS'.
All of them are reflection-free.
The visualizations will be explained in next section.

\emph{(1) Heighway dragon.}    The IFS is given by the path in Figure \ref{paths} (left), that is,
 $$\phi_1(z)=\frac{1-i}{2}z,~\phi_2(z)=-\frac{1+i}{2}z+(1+i).$$
Denote the Heighway dragon by $H$.
 By Theorem \ref{theo-two-state}, $v_1=1, v_2=-1$, and    $\{H,H\}$ are the invariant sets of the following  two-states linear GIFS:
 $$
%\begin{equation}\label{dragon10}
\left \{
\begin{array}{c}
E_1= \phi_1(E_1)+ \phi_2(E_{-1})\\
E_{-1}= \phi_2(E_1)+ \phi_1(E_{-1}),
\end{array}
\right .
%\end{equation}
$$

 \emph{(2) Peano curve.} The IFS is given by the path in Figure \ref{paths}(middle).
 Then $v_1=\cdots =v_9=1$, and it is a linear IFS:
 $
 E=\phi_1(E)+\phi_2(E)+\cdots+\phi_9(E).
 $

\emph{(3) Hilbert curve.} The   IFS is given by the path in Figure \ref{paths}(right).
Clearly $v_1=v_4=-1, v_2=v_3=1$, and the corresponding linear GIFS is:
$$
%\begin{equation} \label{dragon11}
\left \{
\begin{array}{c}
E_1= \phi_1(E_{-1})+ \phi_2(E_{1})+\phi_3(E_1)+ \phi_4(E_{-1})\\
E_{-1}= \phi_4(E_1)+ \phi_3(E_{-1})+ \phi_2(E_{-1})+ \phi_1(E_{1}).
\end{array}
\right .
%\end{equation}
$$
}
\end{example}

\begin{example}\label{Ex-Gosper}
{Gosper curve and anti-Gosper curve.
\rm The
 IFS of the Gosper curve is given by the path in Figure \ref{path-Gosper} (top-left).
 Clearly $v_1=v_4=v_5=v_6=1$, $v_2=v_3=v_7=-1$, 
 and the corresponding linear GIFS can be obtained accordingly.
%$$
%%\begin{equation}\label{Gosper_02}
%\left \{
%\begin{array}{l}
%E_1=\phi_1(E_1)+  \phi_2(E_{-1}) + \phi_3(E_{-1}) + \phi_4(E_1) + \phi_5(E_1)+ \phi_6(E_1)+ \phi_7(E_{-1}),\\
%E_{-1}=\phi_7(E_1)+ \phi_6(E_{-1})+ \phi_5(E_{-1})+ \phi_4(E_{-1})+ \phi_3(E_1) +  \phi_2(E_1)+ \phi_1(E_{-1}).
%\end{array}
%\right .
%%\end{equation}
%$$

%Let $\Phi'$ be the IFS indicated by the path in Figure ?, where
% $v_1=v_4=v_5=v_6=1$ and $v_2=v_3=v_7=-1$;
% the corresponding space-filling curve is called the \emph{anti-Gosper curve} in \cite{Ventrella}.

If we forget the arrows of a path $P$, then we obtain a broken line and we call it the \emph{trace} of $P$.
It is shown in \cite{Yang16} that, among the   path-on-lattice IFS' which  are reflection-free and have the same trace as the Gosper curve (there are $128$ of them), none of them satisfies the open set condition except the Gosper curve and anti-Gosper curve. (The  path of the anti-Gosper curve is determined by
$v_1=v_2=v_5=1$ and $v_3=v_4=v_6=v_7=-1$. See Figure \ref{path-Gosper}(bottom).)

\begin{figure}[h]
  \centering
  % Requires \usepackage{graphicx}
   \includegraphics[width=0.32\textwidth]{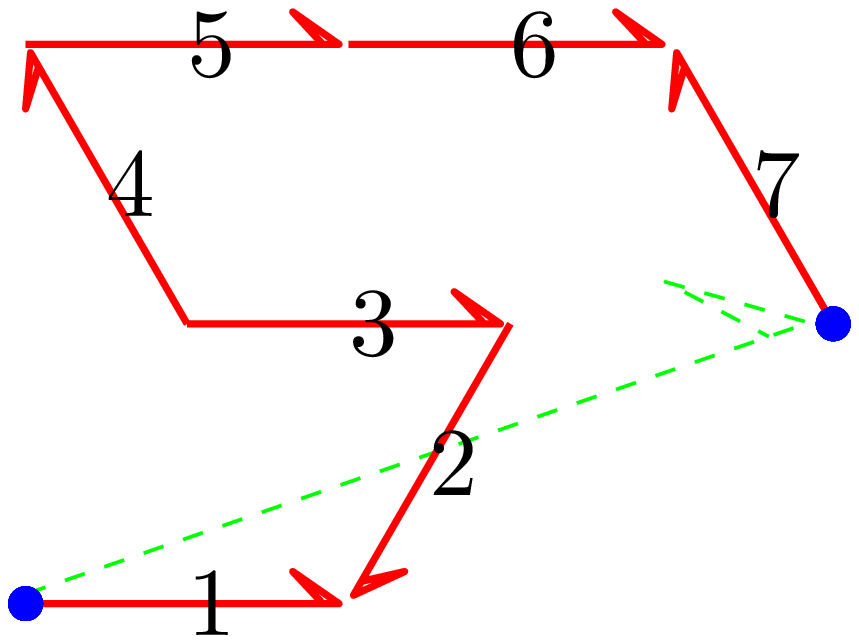}
    \includegraphics[width=0.31 \textwidth]{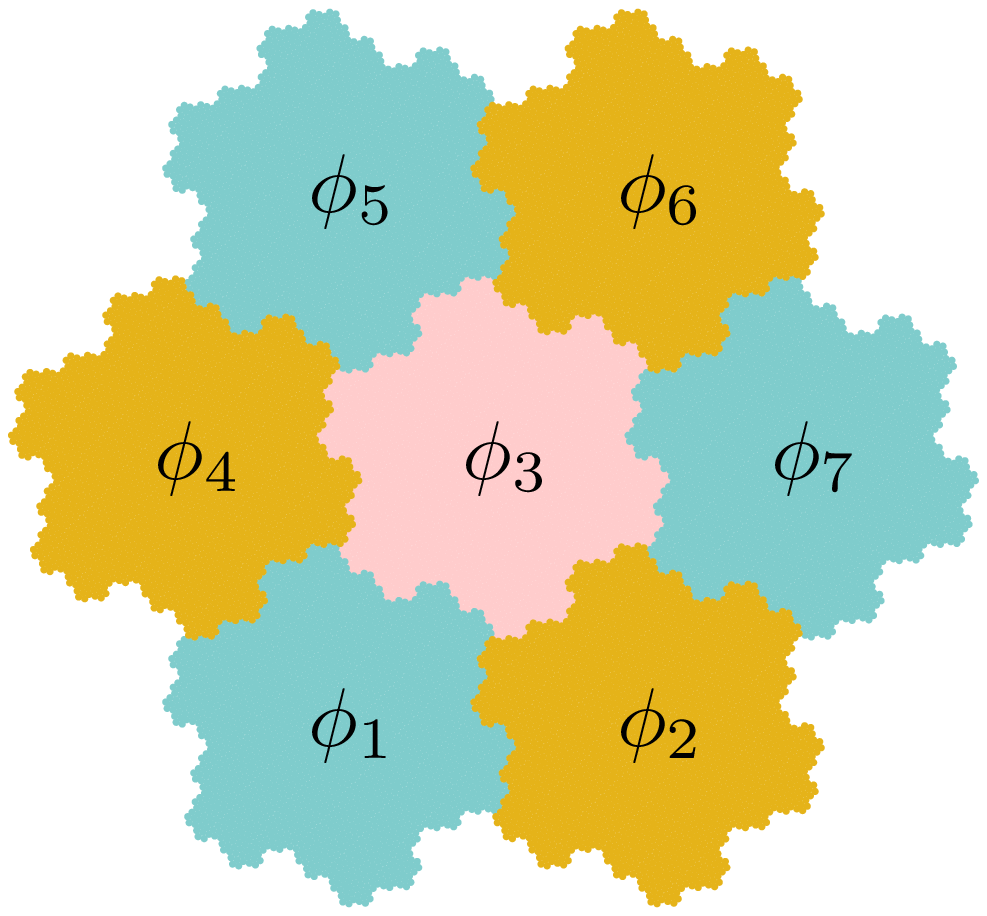}\\
    \ \ \includegraphics[width=0.25\textwidth]{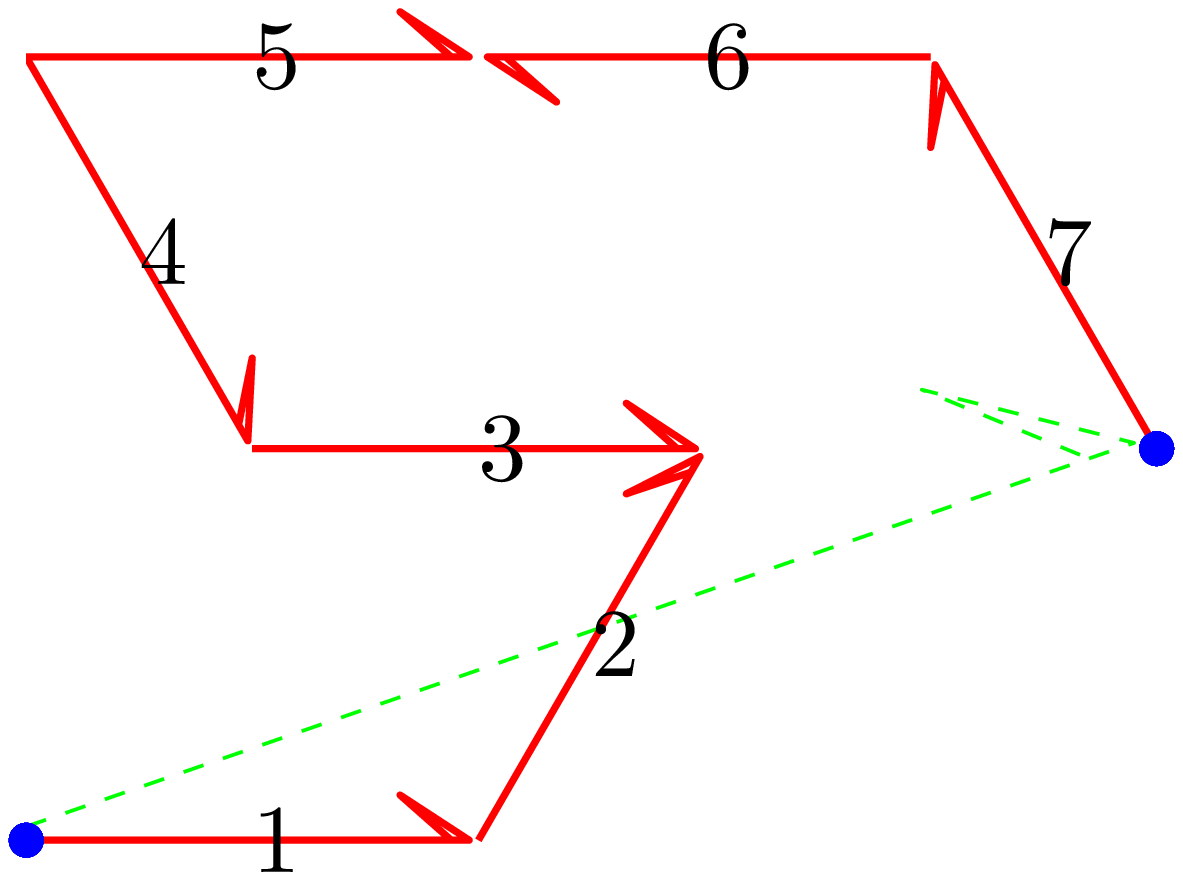} \quad \ \
    \includegraphics[width=0.31\textwidth]{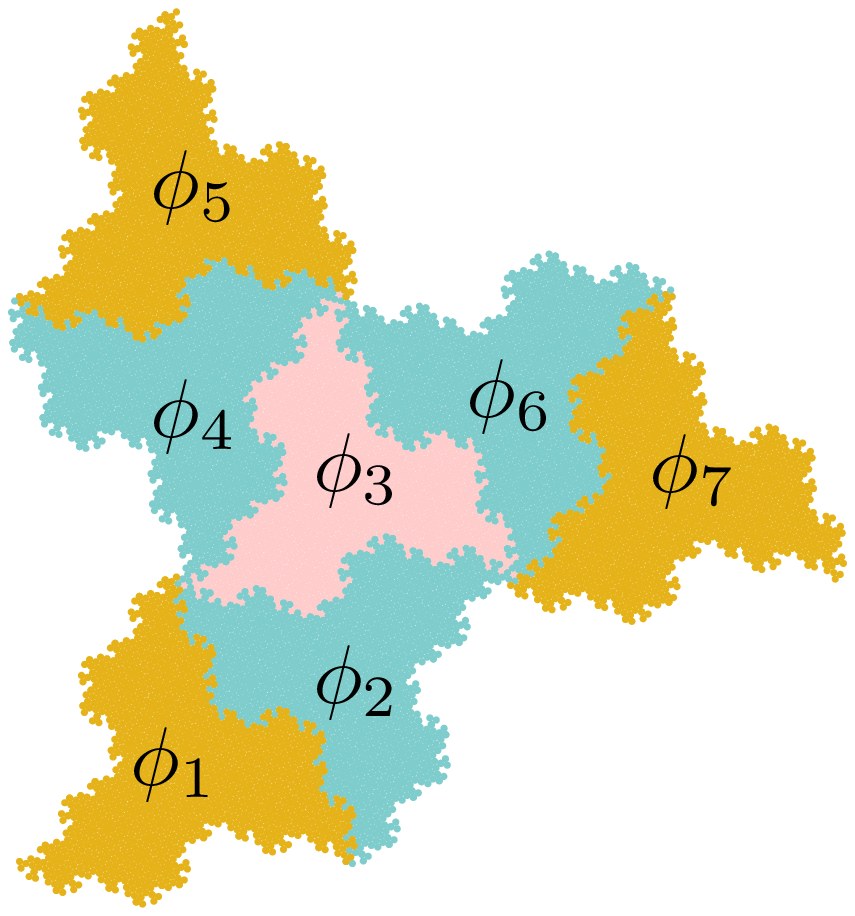}\\
  \caption{Marked paths for Gosper island and anti-Gosper island.}\label{path-Gosper}
\end{figure}
}
\end{example}

%\begin{figure}[h]
%\includegraphics[width=0.26\textwidth]{anti_Gosper_path}
%    \includegraphics[width=0.31\textwidth]{anti-Gosper2}
%  % Requires \usepackage{graphicx}
%  %\includegraphics[width=0.34 \textwidth]{Gosper_060} \hspace{-0.3cm}
%  %\includegraphics[width=0.34 \textwidth]{Gosper_0802} \hspace{-0.3cm}
%  %\includegraphics[width=0.32 \textwidth]{Gosper}
%  \caption{Marked path for the anti-Gosper island.}
%  \label{path-anti-Gosper}
%\end{figure}

  % path-on-lattice IFS;

%\bigskip

\section{\textbf{Visualizations  of space-filling curves}}\label{sec-visual}

We consider the visualizations of linear GIFS' in this section.

Let us start with linear IFS'.
Let $\mathcal{S}=\{S_i\}_{i=1}^N$ be a linear  IFS satisfying the open set condition.
According to Theorem 1.1,
an optimal parametrization $\varphi$ of $K$ can be  constructed accordingly.
To visualize  the limit curve $\varphi$, we need to choose an initial pattern; indeed, the initial pattern
 can be any curve, but a suitable choice will make the visualization beautiful.

Let us denote by  $L_0$ the initial pattern, and let $a$ and $b$ be  its initial and terminate point, respectively.
(Often $L_0$ is chosen to be a line segment, which we denote by $\overline{[a,b]}$.)

Fix an $n\geq 1$. For $\bomega\in \{1,\dots, N\}^n$, we denote by $x_\omega=S_\bomega(x)$, and denote
 $\bomega^+$ the follower of $\bomega$ (if $\bomega\neq (N)^n)$). Connecting $S_{\bomega}(a)$ and $S_{\bomega}(b)$ by $S_{\bomega}(L_0)$, and connecting
$S_{\bomega}(b)$ and $S_{\bomega^+}(a)$ by a line segment, we obtain the curve
$$L_n=\sum_{|{\boldsymbol \omega}|=n} \big(~S_{\boldsymbol \omega}(L_0)+ \overline{ [b_{\bomega}, a_{\bomega^+}]}~\big).$$
%where $\bomega^+$ is the cylinder next to $\bomega$, and $b_\bomega=S_\bomega(b)$.
Here we use $\sum$ to indicate that $L_n$ is the joining of  small curves, where the order in given by $\bomega$.
% The limit curve of $L_n$ is a space-filling curve, and it is irrelevant to
%the choice of $L_0$.
We call  $L_n$ the $n$-th \emph{approximation} of the space-filling curve $\varphi$.
Different choices of initial patterns may give very different approximations in appearance, though the limit curve is the same.

\begin{example}\label{Approx-1}
{\rm
\emph{Peano curve.} The linear GIFS structure is  given in Example \ref{Ex-Peano}.
Figure \ref{PP01}(left and middle)  shows two different $2$nd approximations of the Peano curve. For the left picture,
the initial pattern $L_0=\{3/2\}$ is a singleton, and for the middle picture,  $L_0=\overline{[\epsilon, 3-\epsilon]}$ with $\epsilon=0.4$.

\begin{figure}[h]
  % Requires \usepackage{graphicx}
  %\includegraphics[width=0.33\textwidth]{PeanoNew_01}
  \includegraphics[width=0.33 \textwidth]{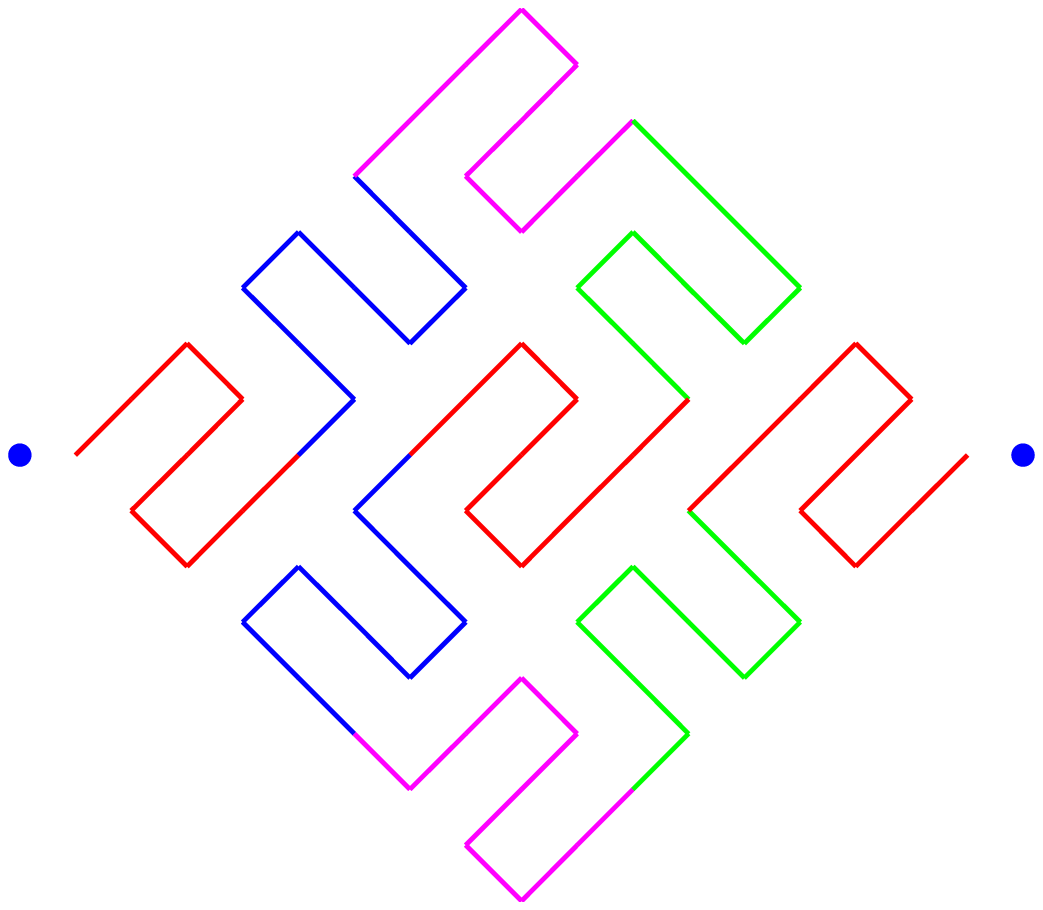}\includegraphics[width=0.33 \textwidth]{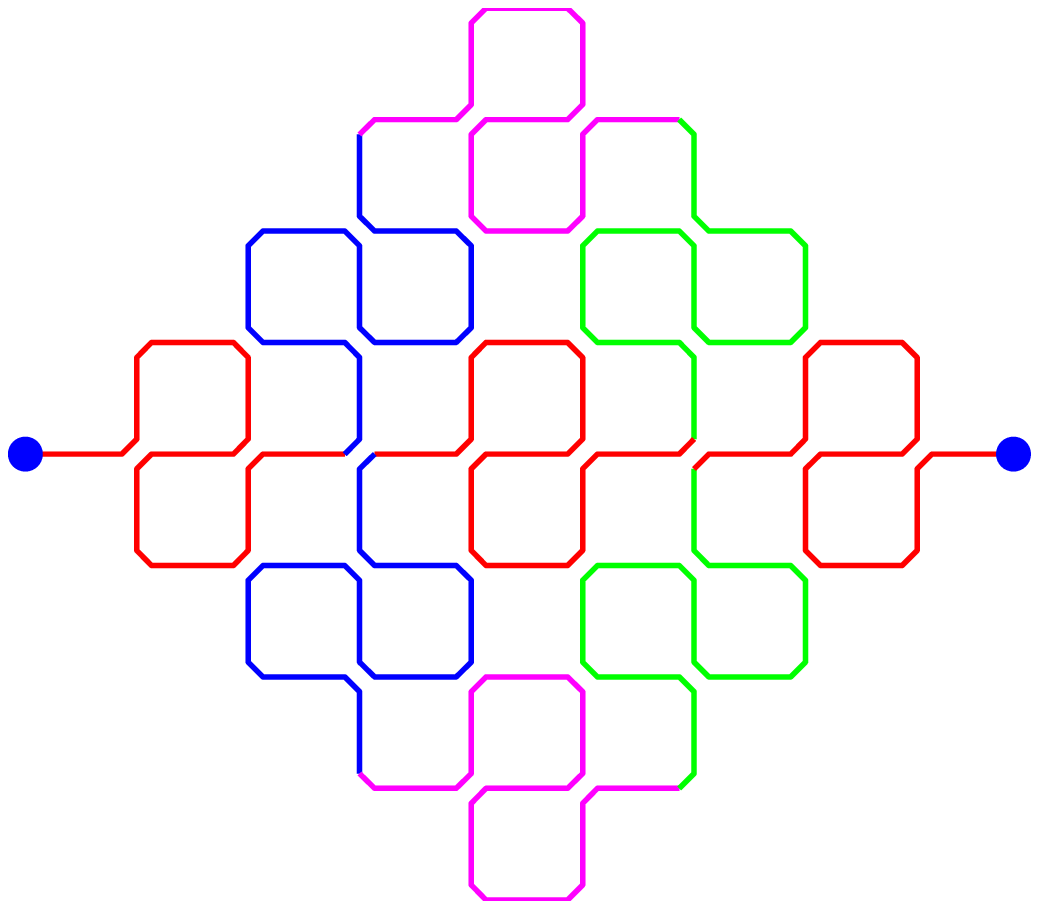}
   \includegraphics[width=0.33\textwidth]{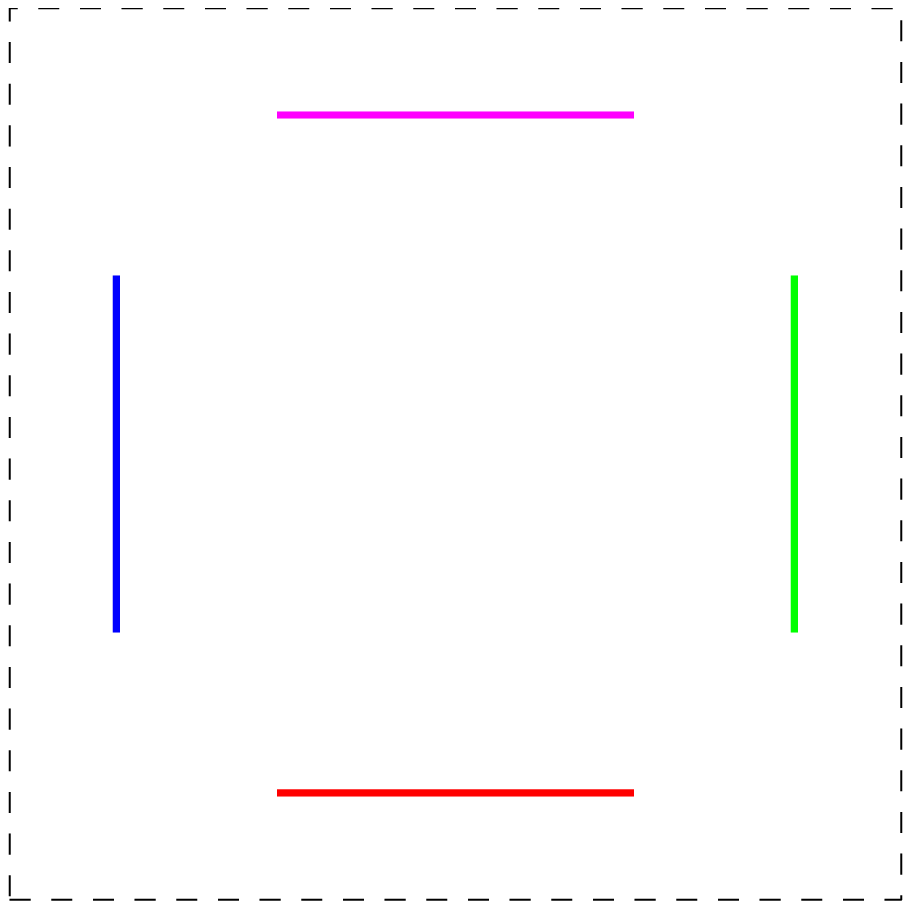}
  \caption{Left and middle: Visualizations of Peano curve. Right: Initial patterns
  of the Sierpi\'nski curve. The red, green, purple and blue line segments are initial patterns for $T_1,T_2,T_3$ and $T_4$, respectively.
 The dotted square is the unit square.}\label{PP01}
\end{figure}

 }
\end{example}

As for a linear GIFS, we have to choose an initial pattern for each $E_j$, which we denote by $L_0^1,\dots, L_0^N$.
Denote the initial point of the pattern $L_0^j$ by $a_j$ and the end point by $b_j$.
We define the $n$-th approximation of $E_j$ to be
$$L_n^j=\sum_{\bomega\in\Gamma_j^n} \big(~g_{\bomega}(L_0^{t(\bomega)})+\overline{[b_{\bomega},a_{\bomega^+}]}~\big),$$
where   $b_{\bomega}=g_{\bomega}(b_{t(\bomega)})$ and $a_{\bomega^{+}}=g_{\bomega^{+}}(a_{t(\bomega^{+})})$.

\begin{example} {\rm For  space-filling curves in this example, the linear GIFS structures are  given in Example  \ref{ex-Sier}, \ref{Ex-Peano} and \ref{Ex-Gosper}. The visualizations and initial patterns are listed in the following table\footnote
{Figure \ref{PHS}(middle) shows the Hilbert curve generated by the path-on-lattice IFS  multiplied by $1/2$.
 }.

%\begin{table}
\begin{tabular}[t]{l|l|l|l}
\hline
Hilbert curve & Fig. \ref{PHS}(middle) & 3rd &  $L_0^1=L_0^2=\{1+\mi\}$\\
Heighway dragon & Fig. \ref{fractal}(left) & 8th &  $L_0^1=L_0^2=\{(1+\mi)/2\}$\\
Gosper curve & Fig. \ref{pic-Gosper}(right) & 3rd & $L_0^1=\overline{[0, d]}, L_0^2=\overline{[d,0]}$ where $d=2+e^{\mi\pi/3}$\\
Sierpi\'nski curve & Fig. \ref{PHS}(right) & 3rd & Fig. \ref{PP01}(right)\\
\hline
\end{tabular}
%\end{table}
 \smallskip
 }
 \end{example}

\section{\textbf{The four-star tile}}\label{sec-four-star}

The four-tile star is a $4$-reptile generated by the IFS
$$
S_1(z)=-\frac{z}{2}, \ S_2(z)=-\frac{z}{2}-\mi,
\ S_3(z)=-\frac{z}{2}+\exp(\mi\frac{5\pi}{6}), \ S_4(z)=-\frac{z}{2}+\exp(\mi\frac{\pi}{6}).
$$
Actually, the $S_j$ map the big dotted triangle to the four small triangles in Figure \ref{4star}.
  \cite{Gary} gave a visualization of the four-star tile without any explain (see Figure \ref{4star}). The  mathematical theory behind
are provided in \cite{Dai15}, and  here we give a sketch of it.

\subsection{\textbf{Skeleton}}
We introduce the notion of \emph{skeleton} of a self-similar set in \cite{Dai15}. (A skeleton is a kind of vertex set of a fractal.) It is shown that $\{a_1,\dots, a_6\}$ is a skeleton of the four-star tile, where
$
a_1=\frac{4}{3}\exp(\mi \frac{5\pi}{6}), \ a_2=\frac{2}{3}\mi, \ a_3=\frac{a_1}{\omega}, \ a_4=\frac{a_2}{\omega}, \
a_5=\frac{a_1}{\omega^2}, \ a_6=\frac{a_2}{\omega^2},
$
and $\omega=\exp(\mi 2\pi/3)$. (See Figure \ref{another} and \ref{4-star}.) 

\begin{figure}[h]
    \includegraphics[width=0.45\textwidth]{four_star_ske_03}
    \includegraphics[width=0.4\textwidth]{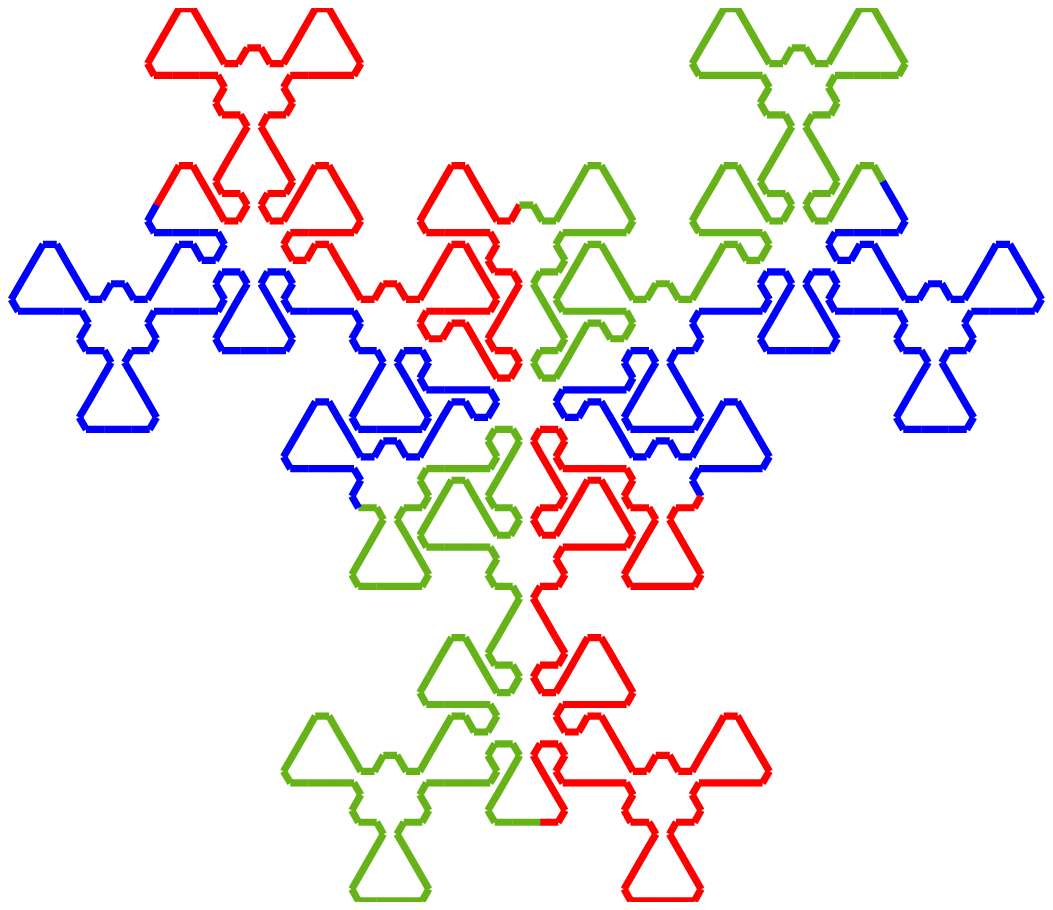}\\
  \caption{Another visualization of the four-star tile.}
  \label{another}
\end{figure}

\medskip
\subsection{\textbf{Substitution rule.}} Let us consider the polygon $P$ with vertices $a_1,a_2,\dots, a_6$.
Here we regard $P$ as a graph with directed edges, where each edge has the clock-wise orientation.
 Let us denote the edges by $x,y,z,u,v,w$. (See Figure \ref{4-star}(left).)
Figure \ref{4-star}(right) indicates  the graph
$$S_1(P)\cup S_2(P)\cup S_3(P)\cup S_4(P)$$
which  consist of $24$ directed edges.  A Eulerian path of the graph is indicated by Figure \ref{4-star}(right), and the path
is divided into $6$ parts indicated by different colors.

Replacing a segment in Figure \ref{4-star}(left) by the broken lines in Figure \ref{4-star}(right) with the same color, we obtain a substitution rule.

\begin{figure}[h]
  \includegraphics[width=0.34\textwidth]{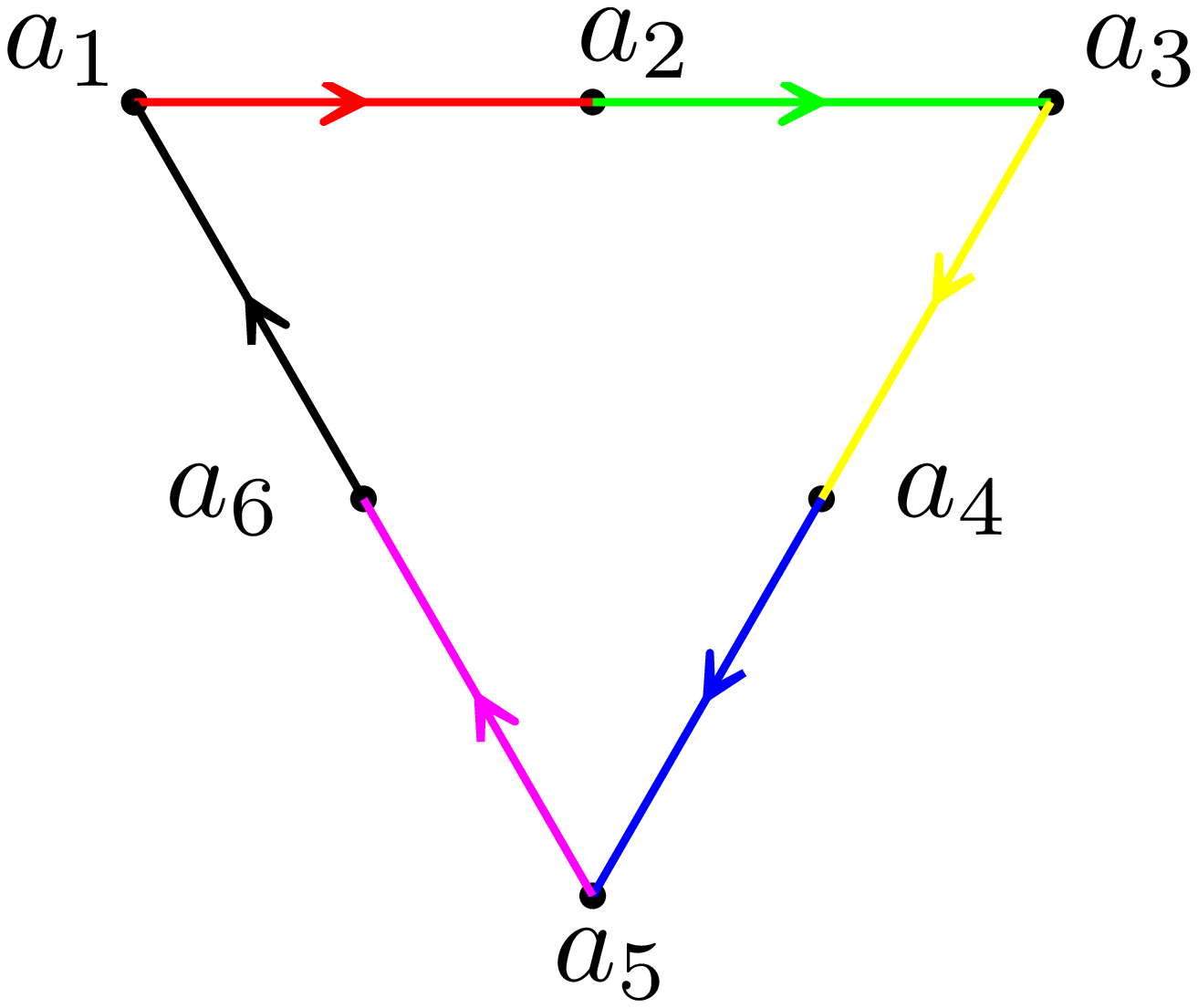}
  \includegraphics[width=0.39\textwidth]{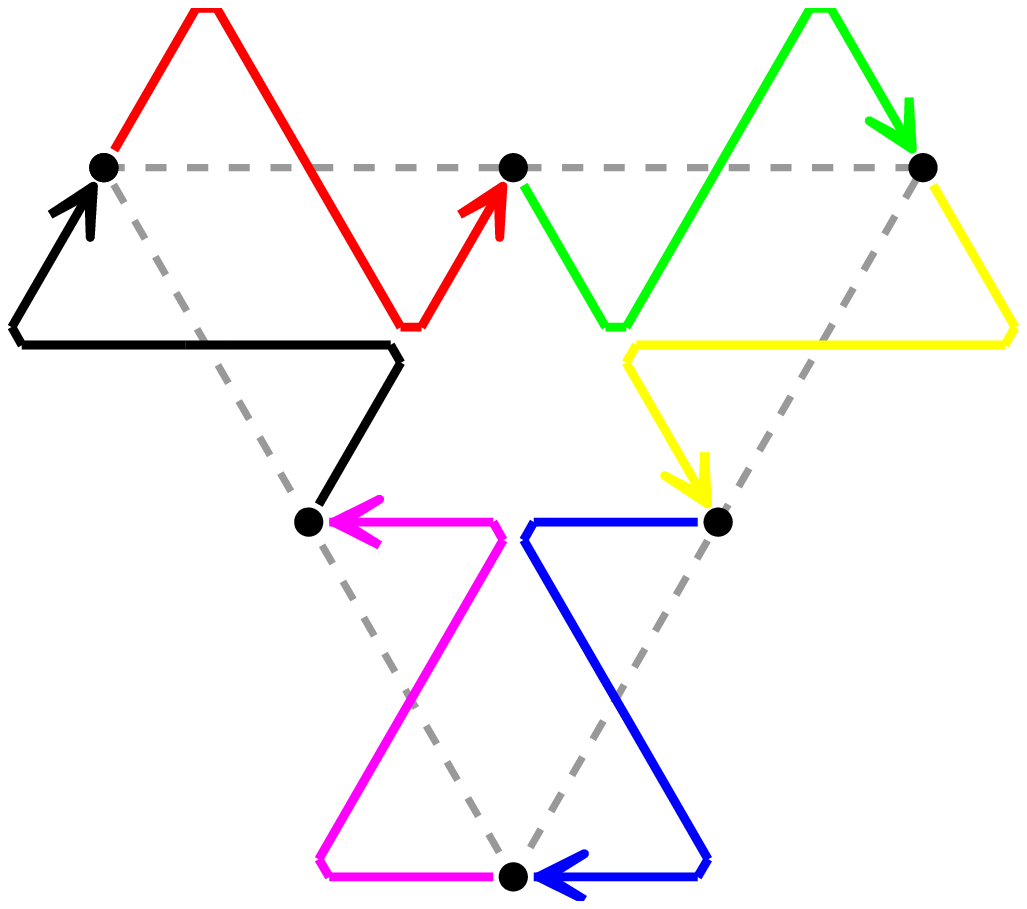}\\
  \caption{Substitution rule of the Four-star tile}\label{4-star}
\end{figure}

\medskip
\subsection{\textbf{Linear GIFS.}}
To precise the meaning of the substitution rule, we introduce the following  linear GIFS:
$$\left \{\begin{array}{l}
X=S_3(U)+S_3(V)+S_3(W)+S_1(U),\\
Y=S_1(V)+S_4(Z)+S_4(U)+S_4(V),\\
Z=S_4(W)+ S_4(X)+S_4(Y)+S_1(W),\\
U=S_1(X)+S_2(V)+S_2(W)+S_2(X),\\
V=S_2(Y)+S_2(Z)+S_2(U)+S_1(Y),\\
W=S_1(Z)+S_3(X)+S_3(Y)+S_3(Z).\\
\end{array}
\right .
$$
It is shown that the four-star tile coincides with $X\cup Y\cup Z\cup U \cup V\cup W$, and it is a non-overlapping union (in Lebesgue measure).

\subsection{\textbf{Visualizations.}}
 Figure \ref{ini pattern}(left) provides the initial patterns of the third visualization in
 Figure \ref{4star}. If we choose the initial patterns in  Figure \ref{ini pattern}(right), we obtain
 the visualization in Figure \ref{another}(right).

\begin{figure}[h]
  % Requires \usepackage{graphicx}
  \includegraphics[width=.36\textwidth]{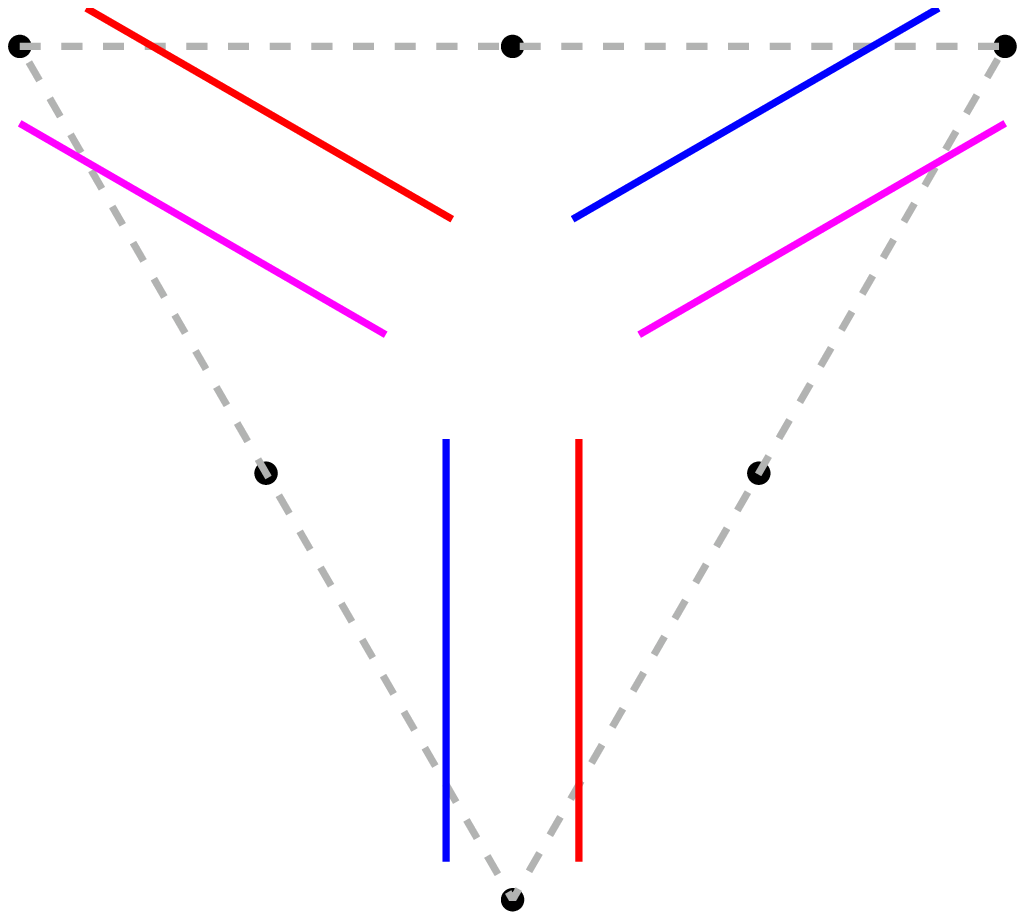}
   \includegraphics[width=.36\textwidth]{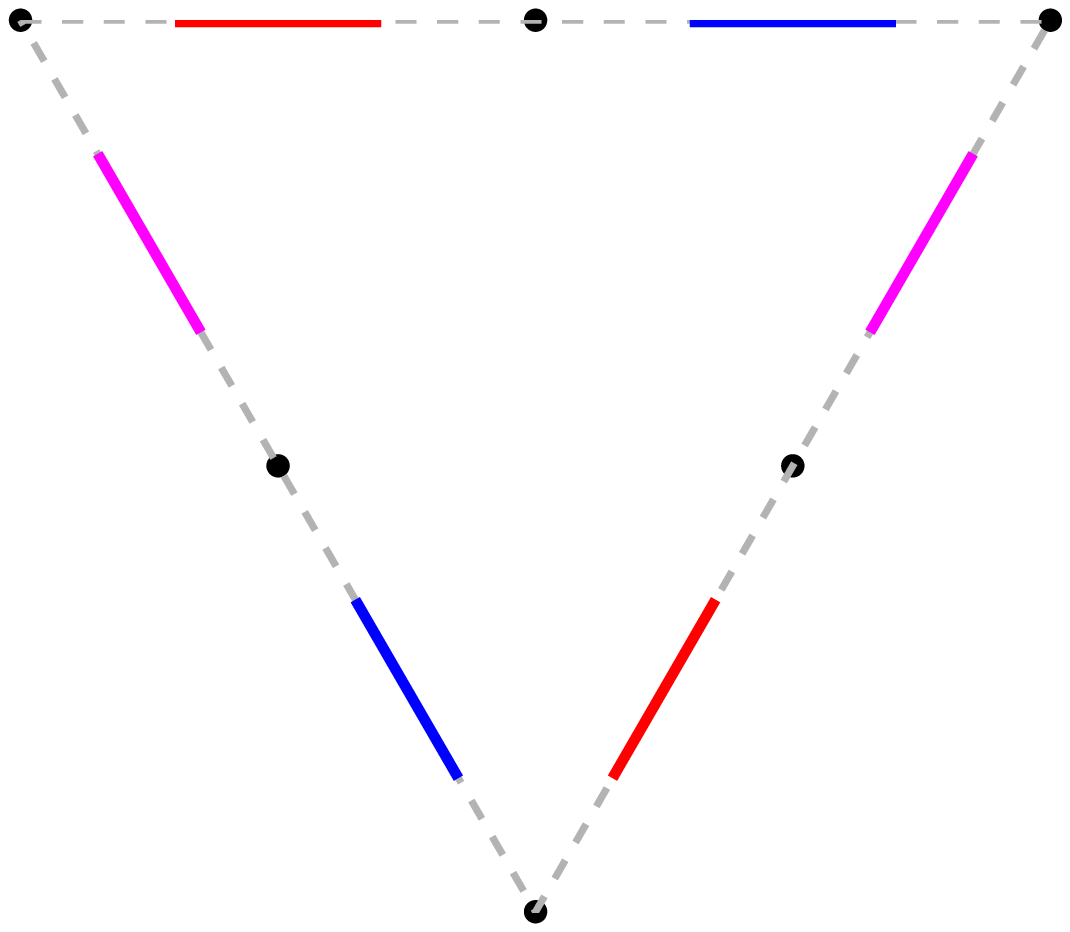}\\
  \caption{Initial patterns for four-star tile.}
  \label{ini pattern}
\end{figure}

 %%%%%%%%%%%%%%%%%%%%%%%%%%%%%%%%%%%%%%%%%%%%%%%%%%%%%%%%%%%%%%%%%%%%%%%%%%%%%%%%%%%%%%%%%%%%%%%%%%

\section{\textbf{Proof of Theorem \ref{M-theo1}; Measure-recording GIFS}}\label{sec-proof}
In this section, we show that the invariant sets of a linear GIFS with the open set condition admit optimal parameterizations.
 An auxiliary GIFS, called  the measure-recording GIFS, will play an important role.

\subsection{Preliminaries to dimensions and measures of graph-directed sets}
Let $(\mathcal{A},\Gamma,\mathcal{G})$ be the  GIFS given by \eqref{uniset}.

We say a directed-graph $({\mathcal A}, \Gamma)$ is \emph{strongly connected}, if there exists a path from $i$ to $j$ for any pair $i,j\in \mathcal{A}$.

Let $r_e$ denote the contraction ratio of the similitude $g_e$ associated with $e\in \Gamma$.
 %To study the Hausdorff dimension of the invariant sets, Mauldin and Williams \cite{MW88} defines a
 Define a matrix $M(t)$, $t>0$,  as
\begin{equation}\label{matrix}
M(t)=\left (\sum_{e\in \Gamma_{ij}} r_e^t\right )_{1\leq i,j\leq N}.
\end{equation}
Then there exists a unique positive number $\delta$ , called the \emph{similarity dimension} of the GIFS, such that
$$\rho(M(\delta))=1$$
where $\rho(M)$ denotes the spectral radius of a matrix $M$.   (See \cite{MW88}, \cite{Fal97}.)

  \begin{theorem}\label{MW} (\cite{MW88})
  Let $(\mathcal{A},\Gamma,\mathcal{G})$ be a GIFS satisfying the OSC and let $\delta$ be the similarity dimension. Then

  (i) $\dim_H E_i=\delta$; and $0<H^\delta(E_i)<\infty$ for all $i=1,2,\dots,N$ if $\Gamma$ is strongly connected.

  (ii)  $(H^\delta(E_1),\dots, H^\delta(E_N))'$ %(regarding as a column vector)
  is an eigenvector of $M(\delta)$ corresponding to eigenvalue $1$. %where $M'$ denotes the transpose of $M$.

  (iii)  ${\mathcal H}^\delta(E_{\boldsymbol \omega}\cap E_{\boldsymbol \gamma})=0$ for any incomparable ${\boldsymbol \omega},{\boldsymbol \gamma} \in \Gamma_i^\ast$. (Two paths are said to be \emph{comparable} if one of them is a prefix of the other.)
\end{theorem}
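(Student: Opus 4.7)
The plan is to prove the three assertions via the classical Mauldin-Williams framework: an upper bound for $\mathcal{H}^\delta(E_i)$ from natural covers, a lower bound via a Perron-Frobenius mass-distribution argument, and essential disjointness as a consequence of the two bounds saturating.

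For the upper bound, iterate the set equation to obtain $E_i = \bigcup_{\bomega \in \Gamma_i^n} E_{\bomega}$ and use it as a cover at level $n$. Setting $D_j := \mathrm{diam}(E_j)$, one has $\mathrm{diam}(E_{\bomega}) \le r_{\bomega} D_{t(\bomega)}$, so
\[
\sum_{\bomega \in \Gamma_i^n} \mathrm{diam}(E_{\bomega})^\delta \;\le\; \sum_{\bomega \in \Gamma_i^n} r_{\bomega}^\delta D_{t(\bomega)}^\delta \;=\; (M(\delta)^n \mathbf{D})_i,
\]
where $\mathbf{D}$ is the vector with entries $D_j^\delta$. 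Since $\rho(M(\delta)) = 1$, the right-hand side is uniformly bounded in $n$, so letting $n \to \infty$ (hence $\max_\bomega r_{\bomega} \to 0$) yields $\mathcal{H}^\delta(E_i) < \infty$ and $\dim_H E_i \le \delta$, giving the upper half of (i).

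For the lower bound, assume $\Gamma$ is strongly connected so that $M(\delta)$ is irreducible with spectral radius $1$; Perron-Frobenius then supplies a strictly positive eigenvector $u$ with $M(\delta) u = u$. Define a probability measure $\mu_i$ on $\Gamma_i^\infty$ via the cylinder masses $\mu_i([\bomega]) := r_{\bomega}^\delta u_{t(\bomega)}/u_i$, and push it forward to $E_i$ via $\pi_i$. The key estimate will be $\mu_i(B(x, r)) \le c\, r^\delta$ for all $x$ and $r$; by the mass distribution principle this yields $\mathcal{H}^\delta(E_i) \ge 1/c > 0$, completing (i). Setting $h := (\mathcal{H}^\delta(E_j))_{j=1}^N$, the upper-bound computation also gives $h \le M(\delta) h$ coordinatewise; combined with $h > 0$ strictly and irreducibility, Perron-Frobenius forces $M(\delta) h = h$, which is (ii). For (iii), the now-proved equality $\mathcal{H}^\delta(E_i) = \sum_{\gamma \in \Gamma_i} \mathcal{H}^\delta(g_\gamma(E_{t(\gamma)}))$ in the set equation $E_i = \bigcup_\gamma g_\gamma(E_{t(\gamma)})$ forces the pairwise intersections $g_\alpha(E_{t(\alpha)}) \cap g_\beta(E_{t(\beta)})$ to be $\mathcal{H}^\delta$-null; for general incomparable $\bomega, \bgamma \in \Gamma_i^\ast$, factor out the longest common prefix and apply this one-step disjointness inside $E_{\bomega \wedge \bgamma}$.

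The main obstacle is the ball estimate $\mu_i(B(x, r)) \le c\, r^\delta$ used in the lower bound. I plan to prove it by a Moran-type stopping argument: let $\{U_j\}$ be the open sets witnessing OSC, and for each $r > 0$ consider the antichain $\Lambda_r$ of cylinders $\bomega$ satisfying $r_{\bomega} < r \le r_{\bomega|_{-1}}$, so that each $\bomega \in \Lambda_r$ has $r_{\bomega} \in [r_{\min} r, r)$. The OSC makes $\{g_{\bomega}(U_{t(\bomega)})\}_{\bomega \in \Lambda_r}$ pairwise disjoint with diameters comparable to $r$, so a classical packing argument bounds the number of members of $\Lambda_r$ whose $E_{\bomega}$ meets $B(x, r)$ by a combinatorial constant depending only on the ambient dimension and the geometry of the $U_j$. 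Summing $\mu_i$-masses of these few cylinders yields the claim, with $c$ depending on $r_{\min}$, $\max u_j/\min u_j$, and the packing constant. The combinatorial counting is the delicate core of the whole proof, and making the constants uniform across states $i$ is where the strong-connectedness hypothesis really bites.
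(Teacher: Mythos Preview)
The paper does not prove this theorem at all: it is quoted from Mauldin--Williams \cite{MW88} and used as a black box in Section~\ref{sec-proof}. There is therefore nothing in the paper to compare your argument against.

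That said, your outline is essentially the classical Mauldin--Williams proof, and it is correct in substance. One small imprecision: from $\rho(M(\delta))=1$ alone you cannot conclude that $(M(\delta)^n\mathbf D)_i$ stays bounded, since a reducible matrix with spectral radius $1$ may have Jordan blocks and hence polynomial growth of powers. For the bare inequality $\dim_H E_i\le\delta$ this does not matter (use $t>\delta$, where $\rho(M(t))<1$ and $M(t)^n\to0$), and for the finiteness $\mathcal H^\delta(E_i)<\infty$ you are already assuming strong connectedness, which makes $M(\delta)$ irreducible and its powers bounded by Perron--Frobenius. So the logic is fine once the two halves of (i) are disentangled. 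Your derivations of (ii) via $h\le M(\delta)h$ together with the left Perron eigenvector, and of (iii) by saturating the subadditive inequality and peeling off the common prefix, are the standard ones and are sound.
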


%In Section \ref{general}, we need the following criterion of the open set condition,
%which was obtained by Schief \cite{Shief94} for IFS, and then generalized to GIFS by Li \cite{LiWX94}.
%
%\begin{theorem}\label{lwenxia}  (\cite{LiWX94})
%Let $(\mathcal{A},\Gamma,\mathcal{G})$ be a GIFS such that $\Gamma$ is strongly connected.
%Let $\{E_j\}_{j=1}^N$ be the invariant sets and let $\delta$ be the similarity dimension.
%If $\mathcal{H}^{\delta}(E_j)>0$ for some $1\leq j\leq N$,  then $\mathcal{G}$ satisfies the OSC.
%\end{theorem}

In the rest of the section, we will always assume that ${\cal G}$ satisfies the OSC,
 and that  $0<\mathcal{H}^{\delta}(E_i)<\infty$ for all $i=1,\dots, N$.
 Let us denote
 $$
 h_i={\mathcal H}^\delta(E_i) \ \text{ and } \
 \mu_i={\mathcal H}^\delta|_{E_i},\quad i=1,\dots, N.
 $$

 Now, we define Markov measures on the symbolic spaces $\Gamma_i^\infty$, $i\in\mathcal{A}$.
 For an edge $e\in \Gamma$ such that $e\in \Gamma_{ij}$, set
\begin{equation}\label{our-weight}
p_e=\frac{h_j}{h_i}r_e^\delta.
\end{equation}
Using Theorem \ref{MW}(ii),
it is easy to verify that $(p_e)_ {e\in \Gamma}$ satisfies
\begin{equation}\label{weights}
\sum_{j\in {\mathcal A}}\sum_{e\in \Gamma_{ij}} p_e=1, \text{ for all } i\in \mathcal{A}.
\end{equation}
We  call $(p_e)_ {e\in \Gamma}$ a \emph{probability weight vector}.
 Let $\mathbb{P}_i$ be a Borel measure on $\Gamma_i^\infty$ satisfying the relations
\begin{equation}\label{measure}
\mathbb{P}_i([\omega_1\dots\omega_n])=h_i p_{\omega_1}\dots p_{\omega_n}
\end{equation}
for all cylinder $[\omega_1\dots\omega_n]$. The existence of such measures are guaranteed by \eqref{weights}. We call $\{{\mathbb P_i}\}_{i=1}^N$ the \emph{Markov measures} induced by the GIFS ${\mathcal G}$. The following result is folklore, see for instance \cite{MW88, L.Y2010}.
%\cite{Wal77}.)

 \begin{theorem}\label{lem-Markov} Suppose the GIFS ${\mathcal G}$ satisfies the OSC and
  $0<h_i<+\infty$ for all $i$.
 Let $\pi_i: \Gamma_i^\infty\to E_i$ be the projections defined by \eqref{eq-projection}. Then
$$%\label{probmeasure}
\mu_i=\mathbb{P}_i\circ \pi_i^{-1}.
$$%\end{equation}
 \end{theorem}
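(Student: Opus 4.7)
The plan is to set $\nu_i := \mathbb{P}_i \circ \pi_i^{-1}$ and show that $\nu_i = \mu_i$ as finite Borel measures on $E_i$, by first verifying that the two measures agree on every cylinder set $E_{\boldsymbol\omega}$ and then upgrading this to full equality through a continuous-function approximation that handles the potential overlap of cylinders.

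First, for a finite path $\boldsymbol\omega = \omega_1\cdots\omega_n$ running from state $i$ to state $j = t(\boldsymbol\omega)$, the defining weight formula $p_e = (h_{t(e)}/h_{s(e)}) r_e^\delta$ telescopes:
$$\prod_{k=1}^n p_{\omega_k} \;=\; \prod_{k=1}^n \frac{h_{t(\omega_k)}}{h_{s(\omega_k)}} r_{\omega_k}^\delta \;=\; \frac{h_j}{h_i}\, r_{\boldsymbol\omega}^\delta,$$
so by \eqref{measure} we get $\mathbb{P}_i([\boldsymbol\omega]) = h_j r_{\boldsymbol\omega}^\delta$. On the geometric side, $g_{\boldsymbol\omega}$ is a similitude of ratio $r_{\boldsymbol\omega}$, so the scaling property of Hausdorff measure gives $\mu_i(E_{\boldsymbol\omega}) = \mathcal{H}^\delta(g_{\boldsymbol\omega}(E_j)) = r_{\boldsymbol\omega}^\delta h_j$. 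Hence $\mu_i(E_{\boldsymbol\omega}) = \mathbb{P}_i([\boldsymbol\omega])$ for every finite path $\boldsymbol\omega$.

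Second, I will transfer these cylinder matchings to an equality of Borel measures by comparing integrals of continuous functions. Fix $f \in C(E_i)$, and for each $n \geq 1$ pick a point $x_{\boldsymbol\omega} \in E_{\boldsymbol\omega}$ for every $\boldsymbol\omega \in \Gamma_i^n$. On the symbolic side, the step function
$$g_n(\boldsymbol\eta) \;:=\; \sum_{|\boldsymbol\omega|=n} f(x_{\boldsymbol\omega}) \mathbb{1}_{[\boldsymbol\omega]}(\boldsymbol\eta)$$
is well-defined since the cylinders $\{[\boldsymbol\omega]\}_{|\boldsymbol\omega|=n}$ partition $\Gamma_i^\infty$ genuinely. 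Since the contraction ratios of length-$n$ paths tend to zero uniformly, uniform continuity of $f$ forces $g_n \to f\circ\pi_i$ pointwise, and bounded convergence yields
$$\int_{E_i} f\, d\nu_i \;=\; \int_{\Gamma_i^\infty} f\circ\pi_i \, d\mathbb{P}_i \;=\; \lim_{n\to\infty} \sum_{|\boldsymbol\omega|=n} f(x_{\boldsymbol\omega})\, \mathbb{P}_i([\boldsymbol\omega]).$$
On the $\mu_i$ side, Theorem \ref{MW}(iii) tells us that the overlap set $A_n = \bigcup_{\boldsymbol\omega \neq \boldsymbol\omega',\,|\boldsymbol\omega|=|\boldsymbol\omega'|=n} (E_{\boldsymbol\omega}\cap E_{\boldsymbol\omega'})$ satisfies $\mu_i(A_n) = 0$. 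Defining $\psi_n(x) := f(x_{\boldsymbol\omega})$ on $E_{\boldsymbol\omega}\setminus A_n$ (using the unique $n$-prefix) and arbitrarily on $A_n$, the same diameter bound gives $\psi_n \to f$ pointwise $\mu_i$-a.e.; dominated convergence yields
$$\int_{E_i} f\, d\mu_i \;=\; \lim_{n\to\infty} \sum_{|\boldsymbol\omega|=n} f(x_{\boldsymbol\omega})\, \mu_i(E_{\boldsymbol\omega}).$$
The first step makes the two Riemann-like sums identical for every $n$, so $\int f\, d\mu_i = \int f\, d\nu_i$ for all $f \in C(E_i)$. Since $E_i$ is compact, this forces $\mu_i = \nu_i$.

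The main obstacle — and the place where the hypotheses enter most decisively — is controlling the geometric overlaps $E_{\boldsymbol\omega}\cap E_{\boldsymbol\omega'}$. On the symbolic side there is no obstacle, as the cylinders $[\boldsymbol\omega]$ form an honest partition; on the geometric side, it is precisely Theorem \ref{MW}(iii) (a consequence of the OSC together with the finiteness of $h_i$) that lets us safely ignore these overlaps $\mu_i$-almost surely when defining $\psi_n$. Without the OSC, one would only get $\mu_i(E_{\boldsymbol\omega}) \leq \mathbb{P}_i([\boldsymbol\omega])$, and the desired equality could fail.
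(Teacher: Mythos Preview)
Your proof is correct. The paper itself does not give a proof of this theorem: it states the result as ``folklore'' and refers the reader to \cite{MW88} and \cite{L.Y2010}. So there is no in-paper argument to compare against.

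Your argument is a clean, self-contained route. The telescoping computation $\mathbb{P}_i([\boldsymbol\omega]) = h_{t(\boldsymbol\omega)} r_{\boldsymbol\omega}^\delta = \mathcal{H}^\delta(E_{\boldsymbol\omega})$ is exactly the right identity, and your decision to pass through integrals of continuous functions rather than trying to match the measures directly on the sets $E_{\boldsymbol\omega}$ is a good one: it circumvents the nuisance that $\pi_i^{-1}(E_{\boldsymbol\omega})$ can be strictly larger than the symbolic cylinder $[\boldsymbol\omega]$ (because of boundary overlaps), so the geometric cylinders do not form a genuine $\pi$-system. Your use of Theorem~\ref{MW}(iii) to kill the overlap set $A_n$ on the $\mu_i$ side is precisely where the OSC hypothesis enters, and you identify this correctly. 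One could alternatively argue by showing that $\mathbb{P}_i\bigl(\pi_i^{-1}(E_{\boldsymbol\omega})\setminus[\boldsymbol\omega]\bigr)=0$ and then invoking a monotone-class or Dynkin argument, but your Riemann-sum approach is just as efficient and perhaps more transparent.
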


%Then by formula \eqref{measure} we obtain measures $\mathbb{P}_i$ on $\Sigma_i$. Hence, we have the Markov measures $\mu_i$ by %\eqref{probmeasure}.

%%%%%%%%%%%%%%%%%%%%%%%%%%%%%%%%%%%%%%%%%%%%%%%%%%%%%%%%%%%%%%%%%%%%%%%%%%%%%%%%%%%

\subsection{\textbf{Measure-recording GIFS of a linear GIFS}}
Let $(\mathcal{A},\Gamma,\mathcal{G},\prec)$ be a linear GIFS such that the open set condition is fulfilled and
$0<h_i={\cal H}^\delta(E_i)<\infty$ for all $i$.
Set
$$
F_i=[0, h_i],\quad i=1,\dots, N.
$$
%Let $(p_e)$ be the weights defined by \eqref{our-weight}.
Fix a state $i$. We list the edges in $\Gamma_i$ in the ascendent order with respect to $\prec$:
$$
\gamma_1,\dots, \gamma_{\ell_i}.
$$
 Recall that $t(\gamma)$ denotes the terminate state of an edge $\gamma$. Then according to
the set equation form of  ${\cal G}$, $E_i$ can be written as
$$
E_i= g_{\gamma_1}(E_{t(\gamma_1)})+ \cdots + g_{\gamma_{\ell_i}}(E_{t(\gamma_{\ell_i})}).
$$
Let
$$f_{\gamma_k}(x)=r_{\gamma_k}^{\delta}x+b_k: ~~\mathbb{R}\longrightarrow\mathbb{R}, \quad 1\leq k \leq \ell_i$$
be similitudes such that
\begin{equation}\label{partition-F}
F_i= f_{\gamma_1}(F_{t(\gamma_1)})+ \cdots + f_{\gamma_{\ell_i}}(F_{t(\gamma_{\ell_i})})，
\end{equation}
where the right hand side is a non-overlapping union of consecutive intervals from left to right. Indeed,
we must have $b_k=\sum_{j=1}^{k-1}h_{t(\gamma_j)}r_{\gamma_j}^\delta$, and \eqref{partition-F} holds by equation \eqref{weights}.
Doing this for all $i\in {\mathcal A}$, then \eqref{partition-F} give us an ordered GIFS with the natural order. We denote this GIFS by
$$
(\mathcal{A}, \Gamma, {\cal G}^*, \prec),
$$
and call it the \emph{measure-recording GIFS} of $({\mathcal A}, \Gamma, {\mathcal G}, \prec)$.

Clearly, the measure-recording GIFS inherits the graph structure and the order structure of the original GIFS;
 moreover,  it records the Hausdorff measure information
of the original GIFS.
The following facts are obvious.
\begin{itemize}
  \item $\{F_i\}_{i=1}^N$ are the invariant sets of the measure-recording  GIFS.
  \item For an edge $e\in \Gamma$, the contraction ratio of $f_e$ is $r_e^\delta$, and the similarity dimension
   $\delta^*$ of ${\cal G}^*$ is $1$.
  \item ${\mathcal G}^*$ satisfies the OSC.
  \item The measure-recording GIFS shares the same symbolic spaces with the original GIFS.
\end{itemize}

Let
$$
\pi_i: \Gamma_i^\infty \rightarrow E_i \text{ and }
\rho_i: \Gamma_i^\infty \rightarrow F_i,\quad i=1,\dots, N,
$$
 be  projections w.r.t. the GIFS $({\cal G})$ and $({\cal G}^*)$, respectively. (See \eqref{eq-projection}.) Then
 %From fact (ii) and (iii), it is seen that

 \begin{lemma}\label{Markovmeasure}
  The Markov measure induced by the measure-recording GIFS coincides with that induced by the original GIFS.
 \end{lemma}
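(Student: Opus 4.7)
The plan is to verify that the probability weight vectors of the two GIFS' coincide edge-by-edge, from which the equality of Markov measures follows by the definition of these measures on cylinders.

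First I would record the three pieces of data about the measure-recording GIFS $(\mathcal{A}, \Gamma, \mathcal{G}^*, \prec)$ listed in the bullet points preceding the lemma: its invariant sets are the intervals $F_i = [0, h_i]$, so the $1$-dimensional Hausdorff measure satisfies $h_i^* := \mathcal{H}^1(F_i) = h_i$; the contraction ratio of $f_e$ is $r_e^* = r_e^\delta$; and the similarity dimension is $\delta^* = 1$. Since $\mathcal{G}^*$ also satisfies the OSC, formula \eqref{our-weight} applies and yields, for any edge $e \in \Gamma_{ij}$,
$$
p_e^* \;=\; \frac{h_j^*}{h_i^*}\,(r_e^*)^{\delta^*} \;=\; \frac{h_j}{h_i}\,r_e^\delta \;=\; p_e.
$$
Thus the probability weight vectors attached to the two GIFS' coincide. (It is worth noting that this identity is exactly the content of the name ``measure-recording'': the construction was engineered, via the choice $b_k = \sum_{j<k} h_{t(\gamma_j)} r_{\gamma_j}^\delta$, so that intervals have length proportional to the $\delta$-dimensional mass of the corresponding cylinders.)

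Once the weights match, the conclusion is immediate from \eqref{measure}: denoting by $\mathbb{P}_i^*$ the Markov measure on $\Gamma_i^\infty$ induced by $\mathcal{G}^*$, we get for every cylinder
$$
\mathbb{P}_i^*([\omega_1\dots\omega_n]) \;=\; h_i^*\, p_{\omega_1}^* \cdots p_{\omega_n}^* \;=\; h_i\, p_{\omega_1} \cdots p_{\omega_n} \;=\; \mathbb{P}_i([\omega_1\dots\omega_n]).
$$
Since the cylinder sets generate the Borel $\sigma$-algebra on $\Gamma_i^\infty$, the two Borel measures $\mathbb{P}_i^*$ and $\mathbb{P}_i$ agree everywhere.

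There is essentially no obstacle here; the lemma is a direct consequence of the definitions, and the only thing to check is that the OSC hypothesis transfers to $\mathcal{G}^*$ (which is noted in the bullet points and is clear, since the intervals $f_{\gamma_k}(F_{t(\gamma_k)})$ in \eqref{partition-F} are pairwise non-overlapping by construction) so that \eqref{our-weight} is legitimately applicable to the measure-recording system. The lemma will then feed into the proof of Theorem \ref{M-theo1} by allowing us to transport the symbolic Markov measure to both $E_i$ (via $\pi_i$) and $F_i$ (via $\rho_i$) simultaneously, and in particular to identify $\mu_i = \mathbb{P}_i \circ \pi_i^{-1}$ with the pushforward of Lebesgue measure on $F_i$ along $\pi_i \circ \rho_i^{-1}$.
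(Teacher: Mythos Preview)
Your proof is correct and follows exactly the same approach as the paper: verify that the probability weights $p_e$ and $p_e^*$ coincide via the identity $p_e^* = \frac{h_j}{h_i}(r_e^\delta)^{\delta^*} = \frac{h_j}{h_i}r_e^\delta = p_e$, and conclude that the induced Markov measures agree. The paper's proof is just a terser version of yours, omitting the explicit cylinder computation.
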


 \begin{proof} Let $(p_e)_{e\in \Gamma}$ and $(p_e^*)_{e\in \Gamma}$ be the probability weights corresponding to ${\cal G}$ and ${\cal G}^*$, respectively. Since $$p_e=\frac{h_j}{h_i}(r_e)^\delta=\frac{\mathcal{L}(F_j)}{\mathcal{L}(F_i)}(r_e^\delta)^{\delta^*}=p_e^*,$$
the two systems define the same probability weight vector and hence define the same Markov measure.
 \end{proof}

  Define
\begin{equation}\label{psi-map}
\psi_i:=\pi_i\circ \rho_i^{-1}.
\end{equation}
The following lemma verifies  that $\psi_i$ is a well-defined mapping from $F_i$ to $E_i$.

%Let $\pi$ be the natural projection of the GIFS $(A, \Gamma, \prec, \mathcal{G})$.

\begin{figure}[h]
  \centering
  % Requires \usepackage{graphicx}
  \includegraphics[width=.4\textwidth]{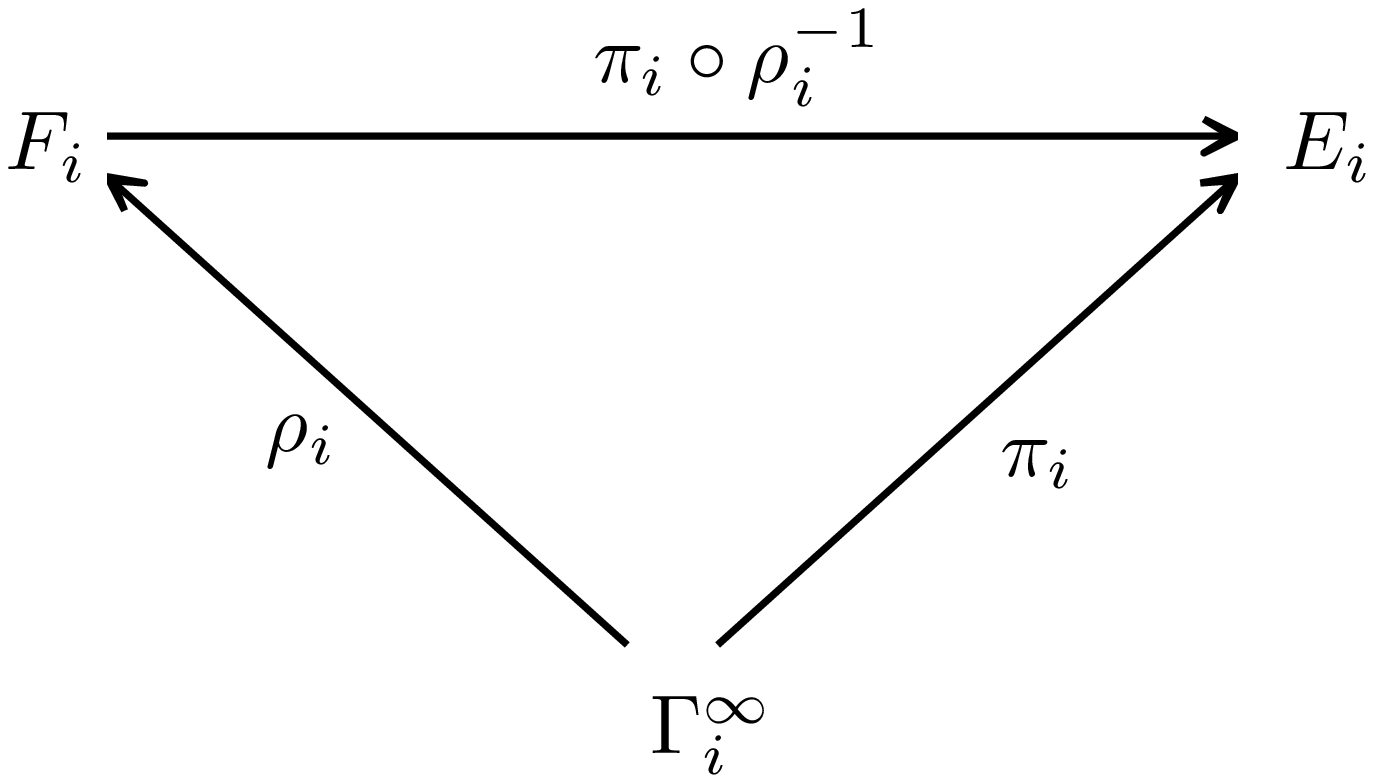}
 \vspace{-.9 cm}%\caption{ }%Reversed Equations}
 % \label{reverse equ}
\end{figure}

\begin{lemma}
Suppose $x\in F_i$ has  two $\rho_i$-codings, say $\rho_{i}^{-1}(x)=\{{\boldsymbol \omega}, \bgamma\}.$
Then
$$\pi_i(\bomega)=\pi_i(\bgamma).$$
%In other words, $\rho_i(\bomega)=\rho_i(\bgamma)$ implies $\pi_i(\bomega)=\pi_i(\bgamma)$.
\end{lemma}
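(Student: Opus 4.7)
The plan is to exploit the fact that the measure-recording GIFS has invariant sets equal to intervals partitioned in order, so two distinct $\rho_i$-codings of the same point $x$ must arise at a shared endpoint between consecutive sub-intervals; the chain condition (Theorem \ref{theo-chain}) then forces the corresponding $\pi_i$-images to coincide.

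First, I would set $\bdeta=\bomega\wedge\bgamma$, the longest common prefix of $\bomega$ and $\bgamma$, and write $k=|\bdeta|$. Without loss of generality, assume $\bomega\prec\bgamma$ in the dictionary order; then $\omega_{k+1}\prec\gamma_{k+1}$ in $\Gamma_{t(\bdeta)}$. Applying $f_{\bdeta}^{-1}$ to $x$ lands inside $F_{t(\bdeta)}$, and both $\omega_{k+1}\omega_{k+2}\dots$ and $\gamma_{k+1}\gamma_{k+2}\dots$ are $\rho_{t(\bdeta)}$-codings of this image point. So we may reduce to the case $k=0$, i.e.\ $\omega_1\prec\gamma_1$.

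Second, the key combinatorial step: since $F_{t(\bdeta)}$ is the non-overlapping consecutive union $\sum_{j=1}^{\ell}f_{\alpha_j}(F_{t(\alpha_j)})$ (with $\alpha_1\prec\cdots\prec\alpha_\ell$ being the edges in $\Gamma_{t(\bdeta)}$), the cylinders $f_{\alpha_j}(F_{t(\alpha_j)})$ intersect in at most shared endpoints, and two such intervals share an endpoint only when they are adjacent. Because $\rho_{t(\bdeta)}(\omega_1\omega_2\dots)=\rho_{t(\bdeta)}(\gamma_1\gamma_2\dots)$ with $\omega_1\prec\gamma_1$, the common point must equal the right endpoint of $f_{\omega_1}(F_{t(\omega_1)})$ and the left endpoint of $f_{\gamma_1}(F_{t(\gamma_1)})$. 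This forces $\omega_1$ and $\gamma_1$ to be adjacent edges in $\Gamma_{t(\bdeta)}$, and iterating the same argument level by level, $\omega_2\omega_3\dots$ is the $\prec$-highest path in $\Gamma_{t(\omega_1)}^{\infty}$ while $\gamma_2\gamma_3\dots$ is the $\prec$-lowest path in $\Gamma_{t(\gamma_1)}^{\infty}$.

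Third, I invoke the chain condition. By Theorem \ref{theo-chain}, linearity of $({\mathcal A},\Gamma,{\mathcal G},\prec)$ gives
$$g_{\omega_1}(\text{tail of }E_{t(\omega_1)})=g_{\gamma_1}(\text{head of }E_{t(\gamma_1)}).$$
But $\pi_{t(\bdeta)}(\omega_1\omega_2\dots)=g_{\omega_1}(\text{tail of }E_{t(\omega_1)})$ (by the characterization of the highest coding as the tail) and $\pi_{t(\bdeta)}(\gamma_1\gamma_2\dots)=g_{\gamma_1}(\text{head of }E_{t(\gamma_1)})$ (similarly for the lowest coding). Hence these two $\pi_{t(\bdeta)}$-images coincide, and applying $g_{\bdeta}$ gives $\pi_i(\bomega)=\pi_i(\bgamma)$, as desired.

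The main obstacle I anticipate is the careful bookkeeping in the second step: one must argue that equality of the two $\rho$-projections on nested intervals forces the two tail sequences to be, respectively, extremal (highest and lowest) from the point of divergence onward, and not just at level $k+1$. This uses the fact that the measure-recording GIFS partitions each $F_j$ into non-overlapping consecutive subintervals at every level, so any shared interior point would contradict the non-overlapping property, while a shared boundary point must propagate the extremality condition downward through the hierarchy of cylinders.
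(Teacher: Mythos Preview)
Your proof is correct, but it takes a somewhat different route from the paper's.

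The paper argues more directly from the definition of linear GIFS: it shows that for every $n\geq 1$ the truncations $\bomega|_n$ and $\bgamma|_n$ are \emph{adjacent} in $\Gamma_i^n$ (otherwise an intermediate cylinder $F_{\bdeta|_n}$ would separate $F_{\bomega|_n}$ from $F_{\bgamma|_n}$, contradicting $\rho_i(\bomega)=\rho_i(\bgamma)$). Linearity then gives $E_{\bomega|_n}\cap E_{\bgamma|_n}\neq\emptyset$ for all $n$, so $|\pi_i(\bomega)-\pi_i(\bgamma)|\leq \mathrm{diam}\,E_{\bomega|_n}+\mathrm{diam}\,E_{\bgamma|_n}\to 0$, and equality follows.

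You instead pin down the two codings explicitly: after the split they must be the highest and lowest continuations, and then you invoke the chain condition (Theorem~\ref{theo-chain}) to identify the two $\pi$-images as a single point $g_{\omega_1}(\text{tail of }E_{t(\omega_1)})=g_{\gamma_1}(\text{head of }E_{t(\gamma_1)})$. This is more structural and yields extra information (namely, exactly which point of $E_i$ is hit), at the cost of the extra bookkeeping you flag in your last paragraph. The paper's argument is shorter because it never needs to identify the codings precisely; adjacency at every level plus shrinking diameters is enough. Both arguments ultimately rest on the equivalence proved in Theorem~\ref{theo-chain}, just accessed from opposite sides.
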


\begin{proof}
 Write ${\boldsymbol \omega}=(\omega_k)_{k=1}^\infty$ and ${\boldsymbol \gamma}=(\gamma_k)_{k=1}^\infty$.
 We claim that  $\omega_1\dots \omega_{n}$ and $\gamma_1\dots \gamma_n$ are adjacent for all $n\geq 1$,
 for otherwise, there exists $\eta_1\dots \eta_n$ such that
  $$
  \omega_1\dots \omega_n\prec \eta_1\dots \eta_n\prec \gamma_1\dots \gamma_n,
  $$
  and the interval $F_{\eta_1\dots \eta_n}$ separates $F_{\omega_1\dots \omega_n}$ and $F_{\gamma_1\dots \gamma_n}$,
  contradicting to $\rho_i(\bomega)=\rho_i(\bgamma)=x$. Our claim is proved. It follows that
 $E_{ \omega_1\dots \omega_n }\cap E_{\gamma_1\dots \gamma_n}\neq \emptyset$, since $(\mathcal{A},\Gamma, \mathcal{G}, \prec )$ is a linear GIFS. Hence $|\pi_i(\bomega)-\pi_i(\bgamma)|\leq \text{diam } E_{\bomega|_n}+\text{diam } E_{\bgamma|_n}$,  so the distance between $\pi_i({\boldsymbol \omega})$ and $\pi_i({\boldsymbol \gamma})$ can be arbitrarily small,
   which implies that
$\pi_i({\boldsymbol \omega})=\pi_i({\boldsymbol \gamma})$.
\end{proof}

Now, we prove Theorem \ref{M-theo1} by showing that the mapping $\psi_i$ is an optimal parametrization of $E_i$.
\medskip

\subsection {Proof of Theorem \ref{M-theo1}.} Let ${\cal G}^*$ be the measure-recording GIFS of ${\cal G}$.
Let $\psi_i=\pi_i\circ\rho_i^{-1}.$ Let
 $\nu_i={\mathcal L}|_{F_i}$ be the restriction of the Lebesgue measure on $F_i$, $\mu_i={\mathcal H}^s|_{E_i}$,
 and  ${\mathbb P}_i$ be the common Markov measure
of ${\mathcal G}$ and ${\mathcal G}^*$.
Then  $\nu_i=\mathbb{P}_i\circ\rho_i^{-1}$, $\mu_i=\mathbb{P}_i\circ\pi_i^{-1}$ by Theorem \ref{lem-Markov}.

(i) First, we prove that $\psi_i$ is almost one to one.

Let $Q_i$ be the set of points in $E_i$ possessing more than one $\pi_i$-codings.
Since %$Q_i$ has the decomposition
$$
\displaystyle
Q_i=\bigcup_{n \geq 1}\bigcup_{{\bgamma\neq \bomega \in \Gamma_i^n}} E_{\boldsymbol \gamma}\cap E_{\boldsymbol \omega},
$$
and $\mu_i(E_{\bgamma}\cap E_{\bomega})=0$  by Theorem  \ref{MW}(iii), we obtain  $\mu_i(Q_i)=0$.
%\end{equation}
%\underset  是让记号在并的下面 {|\boldsymbol \gamma|=|\boldsymbol \omega|=n}
Denote
$$
\Delta_i=\pi_i^{-1}(Q_i),
$$
then $\pi_i$ is injective when restricted to  $\Gamma_i^\infty\setminus \Delta_i$, and ${\mathbb P}_i(\Delta_i)=\mu_i(Q_i)=0$.

Similarly, let $Q_i'$ be the set of points in $F_i$ possessing more than one $\rho_i$-codings, then $\nu_i(Q_i')=0$.
Let $\Delta_i'=\rho_i^{-1}(Q_i')$, then $\rho_i$ is injective when restricted to  $\Gamma_i^\infty\setminus \Delta_i'$, and
${\mathbb P}_i(\Delta_i')=\nu_i(Q_i')=0$.

Let
$$
F_i'=\rho_i(\Gamma_i^\infty \setminus(\Delta_i\cup \Delta_i')), \quad
E_i'=\pi_i(\Gamma_i^\infty \setminus(\Delta_i\cup \Delta_i')),
$$
Then $\psi_i: F_i'\to E_i'$ is one-to-one, $\nu_i(F_i\setminus F_i')\leq\mathbb{P}_i(\Delta_i\cup\Delta_i')=0$, and $\mu_i(E_i\setminus
E_i')\leq\mathbb{P}_i(\Delta_i\cup\Delta_i')=0$.

(ii) Secondly, we prove that $\psi_i$ is measure-preserving.
 For any Borel set $B \subset \mathbb{R}^d$, we need to show that $\nu_i(\psi_i^{-1}(B))=\mu_i(B)$; due to (i), it suffices to show
 this hold for $B\subset E_i'$.  Indeed, for $B\subset E_i'$, we have
 $$
\nu_i(\psi_i^{-1}(B))=\nu_i(\rho_i\circ \pi_i^{-1}(B))=\mathbb{P}_i\circ\rho_i^{-1}\circ\rho_i\circ \pi_i^{-1}(B)=\mathbb{P}_i\circ \pi_i^{-1}(B)=\mu_i(B),
$$
%$$
%\begin{array}{ll}
%\nu_i(\psi_i^{-1}(B)) &=\nu_i(\psi_i^{-1}(B\cap E_i'))=\nu_i(\rho_i\circ \pi_i^{-1}(B\cap E_i'))\\
%&=\mathbb{P}_i\circ \pi_i^{-1}(B\cap E_i') =\mu_i(B\cap E_i')=\mu_i(B),
%\end{array}
%$$
where the third equality holds since $\rho_i$ is a bijection when restricted to
$(\Gamma_i^\infty \setminus(\Delta_i\cup \Delta_i'))$.
Similarly, for any Borel set $B\subset \R$, one can show that $\mu_i(\psi_i(B))=\nu_i(B).$

(iii) Finally, we prove the $1/\delta$-H\"older continuity of $\psi_i$.

Let $x_1,x_2$ be two points in $F_i=[0,h_i]$. Let $k$ be the smallest integer such that $x_1, x_2$ belong to two different cylinders of rank $k$, say,  $x_1\in \rho_i([{\boldsymbol \omega}])$,
$x_2\in \rho_i([{\boldsymbol \gamma}])$, where ${\boldsymbol \omega}\neq {\boldsymbol \gamma}\in \Gamma_i^k$.
It is seen that ${\boldsymbol \omega}=\omega_1\dots \omega_k$ and ${\boldsymbol \gamma}$ differ only at the last edge,
that is,
$$
\bgamma=\omega_1\dots\omega_{k-1}\gamma_k.
$$
We consider two cases according to ${\boldsymbol \omega}$ and ${\boldsymbol \gamma}$ are adjacent or not.

\emph{Case 1.} ~ ${\boldsymbol \omega}$ and ${\boldsymbol \gamma}$ are not adjacent. (See figure \ref{unadjacent}.)

\begin{figure}[h]
  \centering
  % Requires \usepackage{graphicx}
  \includegraphics[width=1 \textwidth]{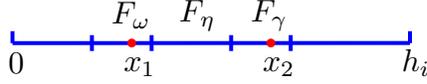}
 \vspace{-1.0 cm} \caption{ ${\bomega}$ and ${\bgamma}$ are not adjacent} \label{unadjacent}
\end{figure}

 Then there is a cylinder
$\bdeta=\omega_1\dots \omega_{k-1}\eta_k$ between $\bomega$ and $\bgamma$, so
$$
|x_1-x_2|\geq \text{diam}~F_{\boldsymbol \eta}\geq
h \cdot r_{{\boldsymbol \eta}}^\delta
\geq h  \cdot r_{\bomega^*}^\delta \cdot r_{\min}^\delta,
$$
where ${\boldsymbol \omega}^*=\omega_1\dots \omega_{k-1}$ is the path obtained by deleting the last edge in ${\boldsymbol \omega}$,
and
$$h=\min \{h_i;~i=1,\dots, N\}, \quad r_{\min}=\min\{r_e;~e\in \Gamma\}.$$
Since  $x_1,x_2$ belong to $\rho_i([{\boldsymbol \omega}^*])$,
 the images of $x_1$ and $x_2$ under $\pi_i\circ\rho_i^{-1}$, which we denote by $y_1$ and $y_2$ respectively,
belong to $\pi_i([{\boldsymbol \omega}^*])=E_{{\boldsymbol \omega}^*}$.
It follows that
\begin{equation}\label{Holder-1}
|y_1-y_2|\leq \text{diam } E_{{\boldsymbol \omega}^*}\leq D \cdot r_{{\boldsymbol \omega}^*}\leq
D \cdot r_{\min}^{-1}\cdot h^{-1/\delta} \cdot |x_1-x_2|^{1/\delta},
\end{equation}
where $$D=\max_{1\leq i\leq N} \text{ diam } E_i.$$%which proves the $\frac{1}{s}-$H\"older continuity.

\medskip
\emph{Case 2.} ~ ${\boldsymbol \omega}$ and ${\boldsymbol \gamma}$ are adjacent. (See figure \ref{adjacent}(left).)

\begin{figure}[h]
  \centering
  % Requires \usepackage{graphicx}
  \includegraphics[width=1 \textwidth]{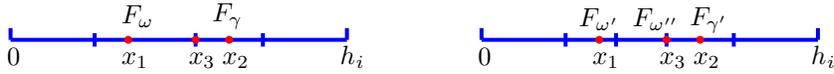}
  \vspace{-0.7 cm}\caption{${\bomega}$ and ${\bgamma}$ are adjacent}\label{adjacent}
\end{figure}

Let $x_3$ be the intersection of $F_\bomega$ and
 $F_\bgamma$. Let $k'$ be the smallest integer such that $x_1$ and $x_3$ belong to different cylinders of rank $k'$, say, $x_1\in \rho_i([\bomega'])$ and $x_3\in \rho_i([\bomega''])$ (see Figure \ref{adjacent}(right)), then
 $|x_1-x_3|\geq \text{diam}~F_{\bomega''}$
 since $x_3$ is an endpoint.

 Let $y_3=\psi_i(x_3)$. Similar to Case 1,  we have
 $$
 |y_1-y_3|\leq
D\cdot  r_{\min}^{-1} \cdot h^{-1/\delta} \cdot |x_1-x_3|^{1/\delta}.
$$
By the same argument, we have
 $$
 |y_2-y_3|\leq
D\cdot r_{\min}^{-1} \cdot h^{-1/\delta} \cdot |x_2-x_3|^{1/\delta}.
$$
Hence, by the fact $x_3$ locates between $x_1$ and $x_2$,
\begin{equation}\label{Holder-2}
 |y_1-y_2|\leq 2D\cdot r_{\min}^{-1} \cdot h^{-1/\delta} \cdot |x_1-x_2|^{1/\delta}.
\end{equation}
Therefore, \eqref{Holder-1} and \eqref{Holder-2} verify  the $1/\delta$- H\"older continuity of $\psi_i$.
$\Box$

\begin{remark} {\rm We note that the initial point $\psi_i(0)$ is the head of $E_i$,
 and  the terminate point $\psi_i(h_i)$ is the tail of $E_i$.
 }
\end{remark}

\begin{remark} {\rm In some text books, the space-filling curves is concerned;
for example, \emph{`Real Analysis'} by Stein and Shakarchi (2005),
\emph{`Topology'} by Munkres (2000), \emph{`Basic Topology'} by  Armstrong (1997). The above proof extends the arguments in these books.}
\end{remark}


\begin{thebibliography}{99}
\bibliographystyle{ieee}
\addcontentsline{toc}{chapter}{Bibliography}

%\bibitem{PatAndBec68} E. D. Patrick, D. R. Anderson and F. K. Bechtel: Mapping multidimensional space to one dimension for computer output display, \textit{IEEE Trans. on Comput.} \textbf{C-17} (1968) , 949-953.

%\bibitem{M.A.Arms97} M. A. Armstrong: Basic Topology, Springer, 1997.

\bibitem{Akiyama10} S. Akiyama and B. Loridant: Boundary parametrization of planar self-affine tiles with collinear digit set, \textit{Sci. China Math.}, \textbf{53} (2010), 2173--2194.

\bibitem{Akiyama11} S. Akiyama and  B. Loridant: Boundary parametrization of self-affine tiles, \textit{J. Math. Soc. Japan}, \textbf{63} (2011), no. 2, 525--579.

%\bibitem{TDTEP97} T. Asano, D. Ranjan, T. Roos, E. Welzl and P. Widmayer,: Space-filling curves and their use in design of
%geometric data structures, \textit{Theor. Comput. Sci.} \textbf{181} (1997), 3--15.

\bibitem{Bader13} M. Bader: Space-filling curves. An introduction with applications in scientific computing. Texts in Computational Science and Engineering, 9. \textit{Springer, Heidelberg}, 2013.

%\bibitem{BandHungRao2004} C. Bandt, N. V. Hung and H. Rao: On the open set condition for self-similar fractals, \textit{Proc. Amer. Math. Soc.}, \textbf{134(5)} (2005), 1369--1374 .

%\bibitem{BandtRao07}C. Bandt and H. Rao: Topology and separation of self-similar fractals in the plane,
%\textit{Nonlinearity}, \textbf{20} (2007), 1463-1474.

%\bibitem{Bandt92} C. Bandt and S. Graf: Self-similar sets. VII. A characterization of self-similar fractals with positive Hausdorff measure. \textit{Proc. Amer. Math. Soc. }, \textbf{114} (1992), no. 4, 995--1001.

%\bibitem{BandtMesi09} C. Bandt and M. Mesing: Self-affine fractals of finite type, \textit{Banach Center Publication}, \textbf{84} (2009) 131--148.

\bibitem{Bedford86} T. Bedford: Dimension and dynamics of fractal recurrent sets, \textit{J. London Math. Soc.}(2),  \textbf{33} (1986)  89--100.

%\bibitem{PlatBar89} L. K. Platzman and J. J. Bartholdi: Spacefilling curves and the planar traveling salesman problem, \textit{Journal of ACM }, \textbf{36} (1989),  719--737.

%\bibitem{WhiteheadChaote94} B. A. Whitehead and T. D. Chaote, Evolving space-filling curves to distribute radial basis functions over an input space, \textit{IEEE Trans. on Neural Networks} \textbf{5} (1994), 15-23.

%\bibitem{Chandler70}D. Chandler and D. E. Knuth: Number representations and dragon curves, I, II, \textit{ J. Recr. Math}, \textbf{3} (1970), 161--181.

%%%%%%%\bibitem{CiesLarson91}K. Ciesielski and L. Larson: The Peano curve and I-approximate differentiability, \textit{Real Anal. Exchange}, \textbf{17} (1991), 608--621.

\bibitem{Ciesie2012} D. Ciesielska: On the 100 anniversary of the Sierpi\'nski space-filling curve, \textit{Wiadomo$\acute{s}$ci Matematyczne} \textbf{48} (2012), no. 2, 69.

    \bibitem{Dai} X. R. Dai and Y. Wang: Peano curves on connected self-similar sets, Unpublished note (2010).

    \bibitem{Dai15}   X. R. Dai, H. Rao and S. Q. Zhang: Space-filling curves of self-similar sets (II): Finite skeleton,
     Eulerian path and substitution rule. Preprint 2015.

     \bibitem{Davis70} C. Davis and D. E. Knuth: Number representations and dragon curves I, II, \textit{Recreational Math.} \textbf{3} (1970), 66--81 and 133--149.

\bibitem{Dekking82} F. M. Dekking: Recurrent sets, \textit{Adv. in Math.}, \textbf{44} (1982),  no. 1, 78--104.

\bibitem{deRham57}G. de Rham: Sur quelques courbes d\'efinies par des \'equations fonctionnelles, \textit{Rend. Sem. Mat. Torino}, \textbf{16} (1957), 101--113.

\bibitem{Fal90} K. J. Falconer: Fractal Geometry, Mathematical Foundations and Applications, Wiley, New York, 1990.

\bibitem{Fal97} K. J. Falconer: Techniques in fractal geometry, John Wiley \& Sons, Ltd., Chichester, 1997.

%\bibitem{France79} M. M. France: Principe de la sym\'etrie perturb\'ee, \textit {S\'eminaire de Th\'eorie des nombres, Paris},  \textbf{80} (1979), 77--98.

\bibitem{Gardner76} M. Gardner: In which ``monster" curves force redefinitions of the word ``curve", \textit{Scientific American}, \textbf{235} (1976), 124--133.

%  \bibitem{HLR03}  X. G. He, K. S. Lau and  H. Rao: Self-affine sets and graph-directed systems, \textit{Constr. Approx.}, \textbf{19} (2003), 373--397.

%\bibitem{Kenyon92} R. Kenyon:  Self-replicating tilings. Symbolic dynamics and its applications (New Haven, CT, 1991), \textit{American Mathematical Society, Providence, RI, vol.}, \textbf{ 135} (1992), 239--263.

%\bibitem{Knopp17} K. Knopp: Einheitliche Erzeugung und Darstellung der Kurven von Peano, Osgood und von Koch, \textit{Arch. Math. Phys. }, \textbf{26} (1917), 103--115.

%\bibitem{FanLau99} A. H. Fan and K. S. Lau: Iterated function system and ruelle operator, \textit{J. Math. Anal. Appl.}\textbf{231}(1999),319-344.

%\bibitem{Kigami} J. Kigami: Analysis on fractal sets.

\bibitem{Gilbert84} W. J. Gilbert: A cube-filling Hilbert curve, \textit{Mathematical Intelligencer}, \textbf{6} (1984), no. 3, 78.

\bibitem{Hata85} M. Hata: On the structure of self-similar sets, \textit{Japan J. Appl. Math.},  \textbf{2} (1985), 381--414.

%\bibitem{Hilbert1891} D. Hilbert: \"Uber die stetige abbidung einer Linie auf ein Fl\"achenstuck, \textit{Math. Ann.},  \textbf{38}  (1891), 459--460.

\bibitem{Hut81} J. E. Hutchinson:  Fractal and self similarity, \textit{Indian Univ. Math. J.}, \textbf{30}  (1981), 713--747.

%\bibitem{Kig03} J. Kigami: Analysis on fractal Sets, \textit{Cambridge University Press},  2003.


%\bibitem{KirLau00} I. Kirat and K. S. Lau,: On the connectedness of self-affine tiles, \textit{ J. Lond. Math. Soc.} \textbf{ 62} (2000), 291-304.
%
%\bibitem{LagaWang93} J. C. Lagarias and Y. Wang: Self-affine tiles in $\mathbb{R}^n$ ,\textit{Advan. Math.} \textbf{121}(1996), 21-49.

%\bibitem{LauLR13} K. S. Lau, J. J. Luo and H. Rao: Topological structure of fractal squares, \textit{Math. Proc. Cambridge Philos. Soc.} \textbf{155} (2013),73-86.

%%%%%%\bibitem{LeraSerg2010} D. Lera and Y. D.  Sergeyev: Lipschitz and H\"older global optimization using space-filling curves, \textit{Appl. Numer. Math.},  \textbf{60} (2010), 115--129.

%\bibitem{LiWX94}  W. X. Li: Separation properties for MW-fractals, \textit{Acta Math. Sinica (Chin. Ser.)}, \textbf{41} (1998), no. 4, 721--726.

%\bibitem{WongLing2003} Y. F. Ling and  N. C. Wong: Constructing space-filling curves of compact connected manifolds, \textit{Comput. Math. Appl.},  \textbf{45} (2003) , 1871--1881.

\bibitem{Lind68} A. Lindenmayer: Mathematical models for cellular interacton in development, Parts I and II,
\textit{Journal of Theoretical Bilology,} \textbf{30} (1968), 280--315.

\bibitem{L.Y2010} J. Luo and Y. M. Yang: On single-matrix graph-directed iterated function systems, \textit{J. Math. Anal. Appl.}, \textbf{372} (2010), no. 1, 8--18.

%\bibitem{NgaiWang01} S. M. Ngai and Y. Wang: Hausdorff dimension of overlapping self-similar sets, \textit{J. London Math. Soc.}, \textbf{63} (2001), no. 2, 655--672.


%\bibitem{MM1988} M. A. Mart\'in and P. Mattila: k-dimensional regularity classifications for s-fractals, \textit{Trans. Amer. Math. Soc.}, \textbf{305} (1988), 293--315.

%\bibitem{MM1993} M. A. Mart\'in and P. Mattila: Hausdorff measures, H\"older continuous maps and self-similar fractals, \textit{Math. Proc. Cambridge Philos. Soc.}, \textbf{114} (1993), 37--42.

\bibitem{MM2000} M. A. Mart\'in and P. Mattila: On the parametrizations of self-similar and other fractal sets, \textit{Trans. Amer. Soc.}, \textbf{128} (2000), 2641--2648.

\bibitem{MW88} D. Mauldin and S. Williams:  Hausdorff dimension in graph directed constructions, \textit{Trans. Amer. Math. Soc.}, \textbf{309} (1988), 811--829.

%\bibitem{Messa00} A. Messaoudi: Fronti\'ere du fractal de Rauzy et syst\'eme de num\'eration complexe, \textit{Acta Arith.}, \textbf{95} (2000), 195--224.

\bibitem{Milne80} S. C. Milne: Peano curves and smoothness of functions, \textit{Adv. Math.}, \textbf{35} (1980), 129--157.

%\bibitem{Mor99} M. Mor\'an: Dynamical boundary of a  self-similar sets, \textit{ Fund. Math.}, \textbf{160} (1999), no. 1, 1--14.

%\bibitem{Newman72} M. Newman: integral matrics, \textit{Academic Press}, New York, 1972.

\bibitem{Peano1890} G. Peano: Sur une courbe qui remplit toute une aire plane, \textit{Math. Ann.},  \textbf{36} (1890), 157--160.

% \bibitem{Polya1913} G. P\'olya: \"Uber eine Peanosche kurve, \textit{Bull. Acad. Sci. Cracovie (Sci. math. etnat. S\'erie A)}, (1913), 1--9.

%\bibitem{Queff87} M. Queff\'elec: Substitution dynamical systems-spectral analysis, Lecture Notes in Mathematics 1294, \textit{Springer, Berlin}, 1987.

\bibitem{RZ14}    H. Rao and S. Q. Zhang: Space-filling curves of self-similar sets (III): Primitive and consistency. Preprint 2015.

%\bibitem{RaoWen98} H. Rao and Z. Y. Wen: A class of self-similar fractals with overlap structure, \textit{Adv. in Appl. Math.} \textbf{20} (1998) 50-72.

%\bibitem{RaoRX06} H. Rao, H.J. Ruan, L.F. Xi: Lipschitz equivalence of self-similar sets, \textit{C. R. Acad. Sci. Paris, Ser. I}, \textbf{342} (2006), 191¨C196.

%\bibitem{Rosenthal71} C. M. Rosenthal: The reduction of the multi-dimensional Schr\:odinger equation to a one-dimensional integral equation, \textit{Chem. Phys. Lett.}, \textbf{10} (1971), 381--386.

%\bibitem{RuWaXi14}   H. J. Ruan, Y. Wang, L. F. Xi: Lipschitz equivalence of self-similar sets with touching structures, \textit{Nonlinearity}, \textbf{27 (6)}(2014), 1299-1321.

%\bibitem{PracharSagan96} K. Prachar and H. Sagan: On the differentiability of the coordinate functions of P\'olya's space-filling curve, \textit{Monatsh. Math.} \textbf{121} (1996), 125-138.



%\bibitem{QuweiSala95}M. K. Quweider and E. Salari, Peano scanning partial distaaace search for vector quantization, \textit{IEEE Signal Processing Letters} \textbf{2} (1995), 170-171.

\bibitem{Remes98} M. Remes: H\"older parametrizations of self-similar sets, \textit{Ann. Acad. Sci. Fenn. Math. Diss.}, no. 112, (1998), 68 pp.

\bibitem{Hans94} H. Sagan: Space-Filling Curve, \textit{Springer-Verlag}, New York, 1994.

%\bibitem{Hans}H. Sagan: A geometrization of lebesgue's space-filling curve,

%\bibitem{SalemZyg45}R. Salem and A. Zygmund: Lacunary power series and Peano curves, \textit{Duke Math. J.}, \textbf{12} (1945), 569--578.

%\bibitem{Schoenberg38}I. J. Schoenberg: On the Peano curve of Lebesgue, \textit{ Bull. Amer. Math. Soc.}, \textbf{44} (1938), 519.

\bibitem{Shief94} A. Schief: Separation properties for self-similar sets, \textit{Proc. Amer. Math. Soc.}, \textbf{122} (1994), 114--115.

%\bibitem{Sierp1912} W. Sierpi\'nski: Sur une nouvelle courbe continue qui remplit toute une aire plane, \textit{Bull. Acad. Sci. de Cracovie}, (1912), 52-66.

%\bibitem{Sirvent2003} V. F. Sirvent: Hilbert's space filling curves and geodesic laminations, \textit{Math. Phys.  Electron. J.}, \textbf{9} (2003), Paper 4, 13 pp. (electronic).

%\bibitem{Sirvent2008} V. F. Sirvent: Space filling curves and geodesic laminations, \textit{Geometriae Dedicata.}, \textbf{135} (2008), 1--14.

%\bibitem{Sirvent2012} V. F. Sirvent: Space-filling curves and geodesic laminations. II: Symmetries, \textit{Monatsh. Math.} \textbf{166}  (2012), no. 3--4, 543--558.

%\bibitem{Ewa94} E. Skubalska-rafajlowicz: The closed curve filling multidimensional cube, \textit{Technical rep.},  ICT Technical University of Wroctaw,  \textbf{46/94},  1994.

%\bibitem{Ewa2001} E. Skubalska-Rafajlowicz: Pattern recognition algorithms based on space-filling curves and orthogonal expansions,\textit{IEEE Trans. Inf. Theory} \textbf{47} (2001), 1915-1927.

%\bibitem{Whyburn42} G. T. Whyburn: Analytic Topology, \textit{American Mathematical Society Colloquium Publications, v. 28}, \textit{Amer. Math. Soc.}, New York, 1942.

%\bibitem{XiXi10} L. F. Xi and Y. Xiong: Self-similar sets with initial cubic patterns, \textit{C. R. Acad. Sci. Paris, Ser. I} \textbf{348} (2010), 15-20.

\bibitem{Gary}  G. Teachout: Spacefilling curve designs featured in the web
    site: http://teachout1.net/village/.

    \bibitem{Yang16} Y.M. Yang and S. Q. Zhang, Path-on-lattice IFS and space-filling curves. Preprint 2015.

\bibitem{Ventrella} J. Ventrella: Fractal Curves in the web site
    :http://www.fractalcurves.com/.


%\bibitem{XiXi09} Y. Xiong, L. F. Xi,: Lipschitz equivalence of graph-directed fractals, \textit{Studia Math.} \textbf{194(2)}(2009), 197-205.

%\bibitem{Yost84} D. Yost: Space-filling curves and universal normed spaces, \textit{ Ann. Univ. Sci. Budapet. E?tv?s Sect. Math.},\textbf{27} (1984), 33-36.


\end{thebibliography}
\end{document}